\title{Double spinor Calabi-Yau varieties}
\author{\vspace{0cm} Laurent Manivel}
\institution{Institut de Math\'ematiques de Toulouse, UMR 5219, Universit\'e de Toulouse, CNRS, UPS IMT F-31062 Toulouse Cedex 9, France}\\
\email{manivel{@}math.cnrs.fr}}
\date{\vspace{-5ex}} 
\journal{\'Epijournal de G\'eom\'etrie Alg\'ebrique} 
\newtheorem{theorem}{Theorem}[section]
\newtheorem{lemma}[theorem]{Lemma}
\newtheorem{proposition}[theorem]{Proposition}
\newtheorem{corollary}[theorem]{Corollary}
\def\PP{\mathbf P}
\def\CC{\mathbb{C}}
\def\ZZ{\mathbb{Z}}
\def\cO{{\mathcal O}}
\def\cHS{\mathcal{HS}}
\def\ra{\rightarrow}\def\lra{\longrightarrow}
\def\af1{\mathbf{aff}_1}
\def\cS{\mathcal S}\def\cO{\mathcal O}\def\cI{\mathcal I}
\def\ra{\rightarrow}
\def\lra{\longrightarrow}
\DeclareMathOperator{\Spin}{Spin}
\DeclareMathOperator{\GL}{GL}
\DeclareMathOperator{\SL}{SL}
\DeclareMathOperator{\PGL}{PGL}
\DeclareMathOperator{\SO}{SO}
\DeclareMathOperator{\PSO}{PSO}
\DeclareMathOperator{\Aut}{Aut}
\DeclareMathOperator{\Pic}{Pic}
\begin{document}


\maketitle



\begin{prelims}


\def\abstractname{Abstract}
\abstract{Consider the ten-dimensional spinor variety $\cS$ in a projectivized half-spin representation of $\Spin_{10}$.
This variety is projectively  isomorphic to its projective dual $\cS^\vee$ in the 
dual projective space. 
The intersection $X=\cS_1\cap \cS_2$ of two general translates of $\cS$ is a smooth Calabi-Yau
fivefold, as well as the intersection of their duals $Y=\cS_1^\vee\cap \cS_2^\vee$. We prove that 
although $X$ and $Y$ are not birationally equivalent, they are deformation equivalent, Hodge equivalent, derived equivalent and L-equivalent.}

\keywords{Spinor variety, Calabi-Yau variety, derived equivalence,
birational equivalence, projective duality}

\MSCclass{14C34; 14F05; 14J32; 14L35}

\vspace{0.15cm}

\languagesection{Fran\c{c}ais}{%

\textbf{Titre. Vari\'et\'es de spineurs doubles de Calabi-Yau} \commentskip \textbf{R\'esum\'e.} Soit $\mathcal{S}$ la vari\'et\'e spinorielle de dimension 10 dans le projectivis\'e d'une repr\'esentation semi-spin de $\Spin_{10}$. Cette vari\'et\'e est projectivement isomorphe \`a son dual projectif $\mathcal{S}^\vee$ dans l'espace projectif dual. L'intersection $X=\mathcal{S}_1\cap\mathcal{S}_2$ de deux translat\'es g\'en\'eraux de $\mathcal{S}$ est une vari\'et\'e lisse de Calabi-Yau de dimension 5, de m\^eme que l'intersection $Y=\mathcal{S}_1^\vee\cap \mathcal{S}_2^\vee$ de leurs duaux. Bien que $X$ et $Y$ ne soient pas birationnellement \'equivalentes, nous montrons qu'elles sont
\'equivalentes par d\'eformations, par \'equivalence de Hodge, par \'equivalence d\'eriv\'ee et qu'elles sont \'egalement L-\'equivalentes.}

\end{prelims}


\newpage

\setcounter{tocdepth}{1} \tableofcontents

\section{Introduction} 

There has been a lot of recent interest in the relations, for pairs of Calabi-Yau threefolds, between derived equivalence, 
Hodge equivalence and birationality. The  Pfaffian-Grassmannian equivalence provided pairs of non birational 
Calabi-Yau threefolds which are derived equivalent, but with distinct topologies. Recently, examples were found of 
non birational Calabi-Yau threefolds which are derived equivalent, and also deformation and Hodge equivalent. They 
are constructed from the six-dimensional Grassmannian $G(2,V)\subset \PP(\wedge^2V)$, where $V$ denotes a five-dimensional
complex vector space. This Grassmannian is well-know to be projectively self dual, more precisely its projective dual is 
$G(2,V^\vee)\subset \PP(\wedge^2V^\vee)$, where $V^\vee$ denotes the dual space to $V$. 
Let $Gr_1$ and $Gr_2$ be two $\PGL(\wedge^2V)$-translates of $G(2,V)$ in $\PP(\wedge^2V)$, and 
suppose they intersect transversely. Then $Gr_1^\vee$ and $Gr_2^\vee$ also intersect transversely in $\PP(\wedge^2V^\vee)$.
Moreover
$$X=Gr_1\cap Gr_2 \qquad \mathrm{and} \qquad Y=Gr_1^\vee\cap Gr_2^\vee$$
are smooth Calabi-Yau threefolds with the required properties. This was established independently in \cite{or} and \cite{bcp}, 
to which we refer for more details on the general background. We should note however that those remarkable threefolds 
had already appeared in the litterature, see \cite{gp,kan,kap}. 

The purpose of this note is to show that the very same phenomena occur if we replace the Grassmannian $G(2,V)\subset \PP(\wedge^2V)$
by the ten-dimensional spinor variety $\cS\subset \PP\Delta$, where $\Delta$ denotes one of the two half-spin representations 
of $\Spin_{10}$. These representations have dimension $16$. Recall that $\cS$ parametrizes one of the two families of maximal isotropic spaces in a ten-dimensional quadratic 
vector space. The projective dual of  $\cS\subset \PP\Delta$ is the other such family $\cS^\vee\subset \PP\Delta^\vee$, which is 
projectively equivalent to $\cS$.  Let $\cS_1$ and $\cS_2$ be two $\PGL(\Delta)$-translates of $\cS$ in $\PP\Delta$, and 
suppose that they intersect transversely. Then $\cS_1^\vee$ and $\cS_2^\vee$ also intersect transversely in $\PP\Delta^\vee$.
Moreover
$$X=\cS_1\cap \cS_2 \qquad \mathrm{and} \qquad Y=\cS_1^\vee\cap \cS_2^\vee$$
are smooth Calabi-Yau fivefolds of Picard number one which are deformation equivalent, derived\footnote{D-equivalent in the sequel.} equivalent (Proposition \ref{dereq}), 
Hodge equivalent (Corollary \ref{hodgeq}), but not birationally equivalent (Proposition \ref{nbireq}). Also, 
the difference of their classes in the 
Grothendieck ring of varieties is annihilated by a power of the class of the affine line (Proposition \ref{relgro}): in the 
terminology of \cite{ks}, $X$ and $Y$ are L-equivalent. This confirms their conjecture, which also
appears as a question in \cite{IMOU}, that (at least for simply connected projective 
varieties) D-equivalence should imply L-equivalence.

From the point of view of mirror symmetry, $X$ and $Y$ being D-equivalent should have the same mirror, and in this respect they form a double mirror. 
Of course, from the projective point of view they are 
also (projective) mirrors one of the other. 
\smallskip

The close connection between the Grassmannian $G(2,5)\subset\PP^9$ and the spinor variety $\cS\subset\PP^{15}$ is classical, 
and manifests itself at different levels.
\begin{enumerate}
\item They are the only two Hartshorne varieties among the rational homogeneous spaces, if we define a Hartshorne 
variety to be a smooth variety $Z\subset\PP^N$ of dimension $n=\frac{2}{3}N$ which is not a complete intersection 
(recall that Hartshorne's conjecture predicts that $n>\frac{2}{3}N$ is impossible) \cite[Corollary 2.16]{zak}.
\item They are both prime Fano manifolds of index $\iota=\frac{N+1}{2}$, while their topological Euler characteristic 
is also equal to $N+1$; this allows their derived categories to admit rectangular Lefschetz decompositions of length 
$\iota$, based on the similar pairs $\langle\cO_Z,U^\vee\rangle$, where $U$ denotes their tautological bundle \cite{kuz}. 
\item The Grassmannian $G(2,5)\subset\PP^9$ can be obtained from  $\cS\subset\PP^{15}$ as parametrizing the lines in $\cS$ 
through some given point \cite{lm2}; conversely, $\cS\subset\PP^{15}$ can be reconstructed from $G(2,5)\subset\PP^9$ by a simple 
quadratic birational map defined in terms of the quadratic equations of the Grassmannian \cite{lm1}.
\end{enumerate}

That the story should be more or less the same for our double spinor varieties, as for the double Grassmannians, is therefore not a big surprise. We thought it would nevertheless be useful to check that everything 
was going through as expected. For that we essentially followed the ideas of \cite{bcp} and \cite{or}, to which this note is of 
course heavily indebted.

\medskip\noindent
{\it Acknowledgements}. We thank M. Brion, A. Kanazawa, A. Kuznetsov, J.C. Ottem, A. Perry, J. Rennemo for their comments and hints. 

\section{Spinor varieties}

\subsection{Pure spinors}

We start with some basic facts about spin representations. See \cite{manspin} for more details, and references therein. Our base field all over the paper will be the field of complex numbers. 
For convenience we will restrict to even dimensions, so we let $V=V_{2n}$ be a complex vector space of dimension $2n$, 
endowed with a non degenerate quadratic form. The variety of isotropic $n$-dimensional subspaces of $V$, considered as 
a subvariety of the Grassmannian $G(n,2n)$, has two 
connected components
$$\cS_+=OG_+(n,2n)\quad \mathrm{and}\quad \cS_-=OG_-(n,2n),$$
called the spinor varieties, or varieties of pure spinors. 
Moreover the Pl\"ucker line bundle restricted to $\cS_+$ or $\cS_-$ has a square root $L$, which is still very 
ample and embeds the spinor varieties into the projectivizations of the two half-spin representations of $\Spin_{2n}$, the 
simply connected double cover of $\SO_{2n}$. We denote the half-spin representations by $\Delta_+$ and $\Delta_-$, in such 
a way that 
$$\cS_+\subset\PP\Delta_+ \qquad \mathrm{and} \qquad \cS_-\subset\PP\Delta_-.$$
Since the two half-spin representations can be exchanged by an outer automorphism of $\Spin_{2n}$, 
these two embeddings are projectively equivalent. Note that the spinor varieties have dimension $n(n-1)/2$, while
the half-spin representations have dimension $2^{n-1}$. The half-spin representations are self dual when $n$ is even, 
and dual one to the other when $n$ is odd. 

It follows from the usual Bruhat decomposition that 
the Chow ring of $\cS_\pm$ is free, and the dimension of its $k$-dimensional component is equal to the number 
of strict partitions of $k$ with parts smaller than $n$. In particular the Picard group has rank one, and $L$ is a generator; we therefore denote $L=\cO_{\cS_\pm}(1)$. We also let 
$U$ be the rank $n$ vector bundle obtained by restricting the tautological bundle of $G(n,2n)$. Then the tangent 
bundle to $\cS_\pm$ is isomorphic to $\wedge^2U^\vee$. Since $\det (U^\vee)=L^2$, 
this implies that $\cS_\pm$ is a prime Fano manifold of index $2n-2$.

\subsection{The ten dimensional spinor variety}

From now on we specialize to $n=5$, and we simply denote $\cS\subset\PP\Delta$ one of the 
spinor varieties. This is a ten-dimensional prime Fano manifold of index eight, embedded in 
codimension five. This case is specific for several reasons. First, it admits a very simple 
rational parametrization.

\begin{proposition}\label{parametrization}
Let $E\subset V_{10}$ be a maximal isotropic subspace, that defines a point of $\cS$. 
Then $\cS\subset\PP \Delta$ is projectively isomorphic to the image of the rational map
from $\wedge^2E^\vee$ to $\PP(\CC\oplus \wedge^2E^\vee\oplus \wedge^4E^\vee)$ that sends $\omega$
to $[1,\omega,\omega\wedge\omega]$. 
\end{proposition}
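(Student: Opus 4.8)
The plan is to use the concrete model of the half-spin representation attached to a Lagrangian splitting of $V_{10}$, together with the fact that a pure spinor is recovered, up to scalar, from its annihilator in the Clifford module. First I would choose a maximal isotropic subspace $F\subset V_{10}$ complementary to $E$, so that the quadratic form identifies $F$ with $E^\vee$. The Clifford algebra of $V_{10}$ then acts on $\wedge^\bullet E^\vee$, with $v\in F\simeq E^\vee$ acting by exterior multiplication and $v\in E$ acting by the contraction $\iot_v$ (up to the standard normalizations). The even part $\Delta=\wedge^{\mathrm{ev}}E^\vee=\CC\oplus\wedge^2E^\vee\oplus\wedge^4E^\vee$ is a half-spin representation; its highest weight line $\CC\cdot 1=\wedge^0E^\vee$ has annihilator $\{v:v\cdot 1=0\}=E$, so under the spinor embedding $[1]\in\PP\Delta$ is precisely the point $[E]\in\cS$.

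Next I would run through the big Schubert cell. A maximal isotropic $E'$ transverse to $F$ is the graph $E'_\omega=\{e+\omega(e):e\in E\}$ of a linear map $\omega:E\to E^\vee$, and the isotropy condition forces $\omega$ to be skew, i.e. $\omega\in\wedge^2E^\vee$; moreover $\dim(E\cap E'_\omega)=\dim\ker\omega$, and $5-\dim\ker\omega$ is always even, so $E'_\omega$ lies in the same family as $E$, namely in $\cS$. This gives an isomorphism from $\wedge^2E^\vee$ onto the open dense subset $\cS^\circ\subset\cS$ of points transverse to $F$.

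The crux is to identify the pure spinor of $E'_\omega$, and the claim is that it is spanned by $\exp(\omega)=1+\omega+\tfrac12\,\omega\wedge\omega\in\Delta$; note that $\omega^{\wedge k}\in\wedge^{2k}E^\vee=0$ for $k\ge 3$, which is exactly the feature special to $n=5$. Indeed, since $\omega$ has even degree, $\iot_e$ is an antiderivation satisfying $\iot_e(\exp\omega)=(\iot_e\omega)\wedge\exp(\omega)$, while $\omega(e)\in E^\vee$ acts by wedging and agrees with $\pm\,\iot_e\omega$ under $F\simeq E^\vee$; hence, with the sign conventions fixed so that they match, $(e+\omega(e))\cdot\exp(\omega)=0$ for every $e\in E$, which is exactly the statement that $E'_\omega$ is the annihilator of $\exp(\omega)$. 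So on $\cS^\circ$ the spinor embedding is $\omega\mapsto[1:\omega:\tfrac12\,\omega\wedge\omega]$. Composing with the linear automorphism of $\PP(\CC\oplus\wedge^2E^\vee\oplus\wedge^4E^\vee)$ which is the identity on the first two summands and multiplication by $2$ on $\wedge^4E^\vee$ turns this into $\omega\mapsto[1:\omega:\omega\wedge\omega]$; since $\cS^\circ$ is dense and $\cS$ is closed and irreducible, the closure of the image is all of $\cS$, and this yields the asserted projective isomorphism.

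I expect the only delicate point to be pinning down the Clifford-module conventions and signs so that the annihilator computation produces exactly $\exp(\omega)$ and not $\exp(-\omega)$ — but even that ambiguity is harmless, since $\omega\mapsto-\omega$ is a linear automorphism of $\wedge^2E^\vee$, so it changes neither the image nor the statement. Everything else reduces to standard facts about the $\Spin_{10}$-action on Schubert cells and the elementary vanishing $\wedge^k E^\vee=0$ for $k\ge 6$, which is what cuts the exponential series off after the quadratic term.
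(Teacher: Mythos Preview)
Your proof is correct and follows essentially the same path as the paper: choose a complementary isotropic $F\simeq E^\vee$, parametrize the open cell of $\cS$ by graphs of skew maps $\omega\in\wedge^2E^\vee$, and identify the spinor embedding on that cell. The only difference is that the paper simply asserts that the embedding is given by the even-size Pfaffians of $\omega$ and defers to references, whereas you supply the explicit verification via the Clifford-annihilator characterization $E'_\omega\cdot\exp(\omega)=0$; this is welcome extra detail but not a different strategy.
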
 

Note that this rational map is $\GL(E)$-equivariant, and that $\GL(E)$ is a Levi factor of the 
parabolic subgroup of $\Spin_{10}$ that stabilizes the base point of $\cS$ defined by $E$.

\begin{proof} 
Let us fix another isotropic subspace $F$ of $V_{10}$, transverse to $E$. The quadratic form
identifies $F$ with the dual of $E$. A general point of $\cS$ corresponds to a subspace 
of $V_{10}$ defined by the graph of a map $\omega$ from $E$ to $F\simeq E^\vee$; the isotropy condition 
translates to the skew-symmetry of this map. The embedding to $\PP \Delta$ is then  given 
by the Pfaffians of $\omega$ of any even size: that is $1$ in size $0$, $\omega$ itself in 
size $2$, $\omega\wedge\omega$ in size four. See \cite[Section 2.3]{manspin} or \cite[Theorem 2.4]{lm1} for more details. 
\hfill $\Box$
\end{proof}

A useful consequence is the following. 
In the preceding description of $\cS$, its projectivized tangent space at the base point 
is $\PP(\CC\oplus \wedge^2E^\vee)$ and the normal space identifies with the remaining factor
$\wedge^4E^\vee$. This identifies the representation of $\GL(E)$ that defines the normal
bundle as a homogeneous bundle on $\cS$ and we deduce our next statement. 
Recall that $U$ denotes the tautological rank $5$ 
vector bundle on $\cS$, and that $\det(U)=\cO_\cS(-2)$, where  $\cO_\cS(1)$ is the positive 
generator of $\Pic(\cS)$. 

\begin{proposition}\label{normal}
The  normal bundle to $\cS$ in $\PP\Delta$ is $\wedge^4U^\vee\simeq U(2)$. 
\end{proposition}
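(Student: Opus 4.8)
The plan is to identify the normal bundle as a $\Spin_{10}$-homogeneous vector bundle and then compute which one it is. First I would recall that $\cS$ is a rational homogeneous space $\Spin_{10}/P$, so the normal bundle $N_{\cS/\PP\Delta}$ is a homogeneous bundle, determined by the representation of the Levi factor $L(P)$ on the fiber at the base point $x_E$ corresponding to the isotropic subspace $E$. By the previous discussion (the consequence drawn from Proposition \ref{parametrization}), the ambient projectivized tangent space $T_{x_E}\PP\Delta$ decomposes $L(P)$-equivariantly as $(\CC\oplus\wedge^2E^\vee)\oplus\wedge^4E^\vee$, where the first summand is $T_{x_E}\cS$ and the complement $\wedge^4E^\vee$ is therefore the fiber of the normal bundle. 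Since $\GL(E)$ is a Levi factor of $P$, the homogeneous bundle with fiber the $\GL(E)$-module $\wedge^4E^\vee$ at $x_E$ is exactly $\wedge^4U^\vee$, because $U$ is by construction the bundle whose fiber at $x_E$ is $E$ itself (the tautological bundle restricted from $G(5,10)$).

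Next I would establish the isomorphism $\wedge^4U^\vee\simeq U(2)$. Since $U$ has rank $5$, there is a natural perfect pairing $\wedge^4U^\vee\otimes U^\vee\to\wedge^5U^\vee=\det(U^\vee)$, giving $\wedge^4U^\vee\simeq U\otimes\det(U^\vee)$. It remains only to recall from the text that $\det(U)=\cO_\cS(-2)$, hence $\det(U^\vee)=\cO_\cS(2)$, and therefore $\wedge^4U^\vee\simeq U\otimes\cO_\cS(2)=U(2)$. Both isomorphisms are $\Spin_{10}$-equivariant, so they patch globally; there is no monodromy or extension subtlety since everything in sight is homogeneous.

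The only genuine point requiring care — the "main obstacle," though a mild one — is justifying that the $\GL(E)$-equivariant decomposition of the ambient tangent space at $x_E$ really does split off $\wedge^4E^\vee$ as the normal direction, rather than, say, mixing with part of $\wedge^2E^\vee$. This is handled by the explicit parametrization of Proposition \ref{parametrization}: the map $\omega\mapsto[1,\omega,\omega\wedge\omega]$ has differential at $\omega=0$ equal to $\eta\mapsto[0,\eta,0]$, so its image (the affine tangent cone at the base point, spanning $\PP(\CC\oplus\wedge^2E^\vee)$ in homogeneous coordinates) is precisely the $\CC\oplus\wedge^2E^\vee$ part, and the normal space is the transverse $\wedge^4E^\vee$. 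Since this identification is $\GL(E)$-equivariant and $\GL(E)$ is the Levi of $P$, passing from the fiber description to the homogeneous bundle is automatic, and the proof is complete.
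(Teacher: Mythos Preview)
Your proposal is correct and follows essentially the same route as the paper: identify the normal space at the base point as the $\GL(E)$-module $\wedge^4E^\vee$ via the parametrization of Proposition~\ref{parametrization}, pass to the homogeneous bundle $\wedge^4U^\vee$, and then use $\det(U)=\cO_\cS(-2)$ to obtain $U(2)$. If anything you are slightly more explicit than the paper, which states the tangent/normal splitting at the base point without spelling out the differential computation you include.
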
 

Another nice property that $\cS$ shares with $G(2,5)$ is that 
its complement is homogeneous.

\begin{proposition}\label{prehom}
The action of $\Spin_{10}$ on $\PP\Delta -\cS$ is transitive.
\end{proposition}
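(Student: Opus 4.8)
The plan is to exploit the simple rational parametrization of Proposition \ref{parametrization} together with a dimension-plus-orbit-count argument. Write $\Delta = \CC \oplus \wedge^2 E^\vee \oplus \wedge^4 E^\vee$ as in that proposition, so that a generic point of $\PP\Delta - \cS$ has homogeneous coordinates $[t, \omega, \eta]$ with $\eta \in \wedge^4 E^\vee \simeq E$ nonzero (the generic direction not tangent to $\cS$ at the base point). First I would use the parabolic subgroup $P \subset \Spin_{10}$ stabilizing the base point $[1,0,0]$ of $\cS$; its unipotent radical acts on $\Delta$ and, as recalled after Proposition \ref{parametrization}, the Levi factor is $\GL(E)$ acting naturally on the three graded pieces (with the appropriate determinant twists). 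The unipotent radical translates $[1,0,0]$ to all points $[1,\omega, \omega\wedge\omega]$ — exactly the affine chart of $\cS$ — and more generally acts on a point $[t,\omega,\eta]$ by a ``lower-triangular'' shear; this should let me reduce a generic point to the form $[0,0,\eta]$ with $\eta \neq 0$, i.e. to a point of $\PP(\wedge^4 E^\vee)$.

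Next I would check that all such points $[0,0,\eta]$, $\eta \neq 0$, lie in a single $\Spin_{10}$-orbit and that this orbit is dense. Since $\GL(E)$ acts transitively on $\wedge^4 E^\vee \setminus \{0\} \cong E\setminus\{0\}$, all these points are $P$-equivalent, hence in one $\Spin_{10}$-orbit $\cO$. To see $\cO$ is dense (equivalently, open, since orbits of algebraic group actions are locally closed and $\PP\Delta$ is irreducible), I would compute the dimension of the stabilizer, or more efficiently argue that $\overline{\cO}$ must be all of $\PP\Delta$: the only $\Spin_{10}$-stable closed subsets of $\PP\Delta$ containing $\cS$ strictly are unions of orbit closures, and the complement of $\cS$ has dimension $15 = \dim \PP\Delta$, while $\cS$ has dimension $10$; since $\cO$ is not contained in $\cS$ (its points have nonzero last coordinate), $\overline{\cO}$ strictly contains $\cS$ and hence — once I know there is no intermediate orbit closure — equals $\PP\Delta$. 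Concretely, the cleanest route is to show $\PP\Delta - \cS$ is a single orbit by verifying that the stabilizer of a point $[0,0,\eta]$ has dimension $\dim\Spin_{10} - 15 = 45 - 15 = 30$: the stabilizer contains the codimension-one parabolic-type subgroup fixing the line $\CC\eta \subset E$ intersected with the group fixing the flag, and a direct Lie-algebra computation (annihilator of $[0,0,\eta]$ in $\fso_{10}$) gives the count.

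The main obstacle I anticipate is the bookkeeping in the second paragraph: verifying that there is genuinely \emph{one} orbit in the complement rather than several, i.e. that the shear action of the unipotent radical of $P$ together with $\GL(E)$ really reaches every point $[t,\omega,\eta]$ with $(\omega,\eta)$ not of the special form $(\omega,\omega\wedge\omega)$ — in particular handling the locus $t = 0$, $\omega \neq 0$, $\eta$ arbitrary. The quickest way around this is the indirect dimension argument: compute $\dim \mathrm{Stab}_{\Spin_{10}}([0,0,\eta]) = 30$ via the isotropy representation, conclude that the orbit $\cO$ of $[0,0,\eta]$ has dimension $15$, hence is dense and open; then $\PP\Delta - \cS$, being a $\Spin_{10}$-stable open set of dimension $15$, must equal $\cO$ since any further orbit in the complement would have dimension $\le 14$ and its closure could not be an irreducible component. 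Alternatively — and this may be the slickest presentation — one invokes that the action of $\Spin_{10}$ on $\Delta$ is a \emph{prehomogeneous vector space} (this is classical, $(\Spin_{10}, \mathrm{half\text{-}spin})$ appears in the Sato–Kimura classification) with the single relative invariant vanishing exactly on the cone over $\cS$, which immediately gives transitivity on the projectivized complement; I would cite this and only sketch the orbit computation.
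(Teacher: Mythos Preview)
Your proposed normal form $[0,0,\eta]$ is the wrong target: such points lie \emph{in} $\cS$, not in its complement. Indeed, using the quadratic equations recalled just after Proposition~\ref{quad}, a point $[\omega_0,\omega_2,\omega_4]$ belongs to $\cS$ iff $\omega_0\omega_4=\omega_2\wedge\omega_2$ and $\omega_4*\omega_2=0$; for $[0,0,\eta]$ both sides of both equations vanish, so the whole linear subspace $\PP(\wedge^4E^\vee)$ sits inside $\cS$. Since $\Spin_{10}$ preserves $\cS$, no point of $\PP\Delta-\cS$ can be moved there, and your reduction step collapses. (A workable representative of the open orbit is rather $[1,0,\eta]$ with $\eta\neq 0$, which visibly violates $\omega_0\omega_4=\omega_2\wedge\omega_2$.) Your fallback dimension argument also has a gap as stated: knowing that one orbit is open only tells you its complement is closed and $\Spin_{10}$-stable; it does not by itself force that complement to coincide with $\cS$, so ``any further orbit would have dimension $\le 14$'' is true but not a contradiction without additional input ruling out intermediate orbit closures.

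Your final alternative is in fact what the paper does: it gives no direct argument at all, simply observing that $(\GL_1\times\Spin_{10},\Delta)$ is a prehomogeneous vector space appearing in the Sato--Kimura classification, and citing Igusa's stronger result that over any field of characteristic $\neq 2$ there are exactly two orbits of nonzero spinors. If you want to present a self-contained proof rather than a citation, fix the normal form as above and then compute the stabilizer of $[1,0,\eta]$, but be aware you will still need an argument (e.g.\ exhibiting an explicit invariant cutting out $\cS$, or a direct orbit analysis) to exclude intermediate orbits.
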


In particular $\Delta$ admits a prehomogeneous action, not of  $\Spin_{10}$, but of $\GL(1)\times  \Spin_{10}$. 
This is discussed on page 121 of \cite{sk}.
In fact, a much stronger result is proved in \cite[Proposition 2]{igusa}: over any field of characteristic different from two, there are only two orbits of non zero spinors.

\medskip
The next important property also shared with $G(2,5)$ is that the 
quadratic equations allow to recover the natural representation. 

\begin{proposition}\label{quad}
The quadratic equations of $\cS$ are parametrized by $V_{10}$. 
\end{proposition}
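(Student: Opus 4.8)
The plan is to realise the space $K$ of quadrics through $\cS$ as a $\Spin_{10}$-submodule of $S^2\Delta^\vee$ and to identify it by combining the decomposition of the tensor square of a half-spin representation with a single dimension count. By Borel--Weil one has $H^0(\cS,\cO_\cS(1))=\Delta^\vee$, and $H^0(\cS,\cO_\cS(2))$ is an \emph{irreducible} $\Spin_{10}$-module, namely the one of highest weight $2\mu$ where $\mu$ is the fundamental (spin) weight attached to $\cO_\cS(1)$; concretely it is one of the two halves $\wedge^5_\pm V_{10}$ of $\wedge^5 V_{10}$, of dimension $126$. Restriction of quadratic forms gives a $\Spin_{10}$-equivariant map
\[
\rho\colon\quad S^2\Delta^\vee=H^0(\PP\Delta,\cO(2))\ \lra\ H^0(\cS,\cO_\cS(2)),
\]
and $K=\ker\rho$ by definition. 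Since $\cS$ is nondegenerate in $\PP\Delta$, for $0\neq s\in\Delta^\vee$ the restriction $s|_\cS$ is nonzero, so $\rho(s^2)=(s|_\cS)^2\neq 0$; hence $\rho$ has nonzero image, which being a submodule of an irreducible module is everything. Therefore $\rho$ is surjective and $\dim K=136-126=10$.

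It remains to pin down $K$ as a representation. The tensor square of a half-spin representation of $\Spin_{10}$ decomposes as $\Delta^\vee\otimes\Delta^\vee=\wedge^1 V_{10}\oplus\wedge^3 V_{10}\oplus\wedge^5_- V_{10}$ (with the half $\wedge^5_-$ matching the chirality of $\Delta^\vee$, and dimensions $10+120+126=256$; see \cite{manspin}). Comparing with $\dim S^2\Delta^\vee=136$ and $\dim\wedge^2\Delta^\vee=120$ forces the splitting $\wedge^2\Delta^\vee=\wedge^3 V_{10}$ and $S^2\Delta^\vee=V_{10}\oplus\wedge^5_- V_{10}$, where $V_{10}=\wedge^1 V_{10}$ is the natural representation. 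These two summands are non-isomorphic irreducibles, so the only submodules of $S^2\Delta^\vee$ are $0$, $V_{10}$, $\wedge^5_- V_{10}$ and the whole space; the unique one of dimension $10$ is $V_{10}$, whence $K\cong V_{10}$, which is the claim.

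The only non-formal input is the value $\dim H^0(\cS,\cO_\cS(2))=126$ — equivalently, that $\cS$ actually lies on some quadric and that the degree-two part of its homogeneous coordinate ring is not larger than expected — and this is the step where a little care is needed. It follows from Borel--Weil together with the Weyl dimension formula, or, concretely, from the parametrisation of Proposition \ref{parametrization}, where $\cS$ appears as the image of $\omega\mapsto[1,\omega,\omega\wedge\omega]$ and one can read the quadrics off directly (for instance the five relations $a\,\eta=\omega\wedge\omega$ in the coordinates $[a,\omega,\eta]$ on $\PP(\CC\oplus\wedge^2 E^\vee\oplus\wedge^4 E^\vee)$, together with five further ones). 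One can in fact avoid the exact dimension altogether: $S^2\Delta^\vee$ is reducible while $H^0(\cS,\cO_\cS(2))$ is irreducible, so $\rho$ is not injective and $K\neq 0$; and $K$ cannot contain the Cartan component $\wedge^5_- V_{10}=V_{2\mu}$, on which $\rho$ is nonzero since $\rho(s^2)\neq0$ for $s$ a highest weight vector; hence $K\subseteq V_{10}$ and $K\neq0$ again give $K\cong V_{10}$.
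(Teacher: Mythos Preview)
Your proof is correct and complete, but it takes a genuinely different route from the paper. The paper proceeds concretely: using the parametrisation of Proposition~\ref{parametrization} it writes a point of $\PP\Delta$ as $[\omega_0,\omega_2,\omega_4]\in\PP(\CC\oplus\wedge^2E^\vee\oplus\wedge^4E^\vee)$ and exhibits the ten quadrics explicitly as $\omega_0\omega_4=\omega_2\wedge\omega_2$ (five equations in $\wedge^4E^\vee\simeq E$) and $\omega_4*\omega_2=0$ (five equations in $E^\vee$), then observes that the parameter space $E\oplus E^\vee=V_{10}$ must carry its structure as the natural $\Spin_{10}$-module because $V_{10}$ is the unique nontrivial ten-dimensional representation. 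Your argument is instead purely representation-theoretic: Borel--Weil gives $\dim H^0(\cS,\cO_\cS(2))=126$ and irreducibility, the restriction map $S^2\Delta^\vee\to H^0(\cS,\cO_\cS(2))$ is surjective onto an irreducible target, and the decomposition $S^2\Delta^\vee\cong V_{10}\oplus V_{2\mu}$ pins down the ten-dimensional kernel as $V_{10}$.

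What each buys: the paper's approach makes the quadrics visible and ties in with the Clifford description $V_{10}\otimes\Delta_\pm\to\Delta_\mp$, which is used later (e.g.\ in the resolution (\ref{resolution}) and in the reconstruction argument of Section~3.3). Your approach, on the other hand, proves cleanly that these ten quadrics are \emph{all} of them, a point the paper leaves implicit; and your final variant (Cartan component survives, hence $K\subset V_{10}$, hence $K=V_{10}$) is particularly efficient since it sidesteps the Weyl dimension computation entirely. You might note that you already allude to the paper's method in your closing paragraph, so the two arguments are nicely complementary.
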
 

These equations can be described in terms of the Clifford product, that defines a 
map $V_{10}\otimes \Delta_\pm\ra \Delta_\mp = \Delta_\pm^\vee$. In more down to earth 
terms, we can use the decomposition $\Delta = \CC\oplus \wedge^2E^\vee\oplus \wedge^4E^\vee$
introduced in Proposition \ref{parametrization}: a point $[\omega_0, \omega_2, \omega_4]$ 
belongs to $\cS$, as is readily verified, if and only if
$$\omega_0\omega_4=\omega_2\wedge\omega_2, \quad \mathrm{and}\quad \omega_4*\omega_2=0.$$
The first equation is in $\wedge^4E^\vee\simeq E$. In the second equation, we used this isomorphism
in order to identify $\omega_4$ with an element of $E$; then contracting with $\omega_2$ gives an
element of $E^\vee$ that we denoted $\omega_4*\omega_2$. We thus get, as expected, quadratic equations parametrized by $E\oplus E^\vee=V_{10}$. And this is necessarily an identification 
as $\Spin_{10}$-modules since $V_{10}$ is its only nontrivial ten dimensional representation.

\medskip
As any equivariantly embedded rational homogeneous variety, the spinor variety is cut out by 
quadrics. Moreover the spinor variety $\cS$ has a beautiful self-dual minimal 
resolution (necessarily equivariant), which appears in \cite[5.1]{weyman}: 
\begin{equation}\label{resolution}
0\ra \cO(-8)\ra V_{10}(-6)\ra \Delta^\vee(-5)\ra \Delta(-3)\ra V_{10}(-2)\ra\cO\ra \cO_\cS\ra 0.
\end{equation}
For future use let us compute the Hilbert polynomial of $\cS\subset\PP\Delta$. 
Note that $H^0(\cO_S(k))$ is, by the Borel-Weil theorem, the irreducible $\Spin_{10}$-module
of highest weight $k\omega_5$ (where $\omega_5$ is the fundamental weight corresponding to the 
half-spin representation $\Delta^\vee$). Its dimension can thus be computed by a direct application 
of the Weyl dimension formula, and we get 
$$H_\cS(k)=\frac{1}{2^63^35^27}(k+1)(k+2)(k+3)^2(k+4)^2(k+5)^2(k+6)(k+7).$$
In particular, as is well-known, $\cS$ has degree $12$.
Finally the Poincar\'e polynomial is also easy to compute; since the Betti numbers are
given by numbers of strict partitions, as we already mentioned, we readily get that
$$P_{\cS}(t)=(1+t^3)(1+t+t^2+t^3+t^4+t^5+t^6+t^7).$$

\subsection{Self duality}

Our next statement is a well-known direct consequence of Proposition \ref{prehom}:

\begin{corollary}
The spinor variety $\cS\subset\PP\Delta$ is projectively self dual. 
\end{corollary}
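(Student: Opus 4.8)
The plan is to exploit Proposition \ref{prehom}: $\Spin_{10}$ acts on $\PP\Delta$ with exactly two orbits, the closed orbit $\cS$ and its dense open complement. Since $n=5$ is odd, $\Delta^\vee$ is again a half-spin representation of $\Spin_{10}$ (the one attached to the opposite family of maximal isotropic subspaces), so Proposition \ref{prehom} applies verbatim to $\PP\Delta^\vee$ as well: it too carries only two $\Spin_{10}$-orbits, a closed one — namely the other spinor variety, call it $\cS'\subset\PP\Delta^\vee$ — and its dense open complement. In particular the only nonempty proper $\Spin_{10}$-stable closed subvariety of $\PP\Delta^\vee$ is $\cS'$.

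Next I would check that the projective dual variety $\cS^\vee\subset\PP\Delta^\vee$ meets these conditions. It is $\Spin_{10}$-stable because $\cS$ is, and formation of the projective dual is $\PGL(\Delta)$-equivariant. It is nonempty, since $\cS$ is a smooth irreducible tenfold and not a linear subspace, so it has a nondegenerate dual. And it is a proper subvariety: by the biduality theorem $(\cS^\vee)^\vee=\cS$, whereas the dual of an entire projective space is empty, so $\cS^\vee\neq\PP\Delta^\vee$. Hence $\cS^\vee$ must be the closed $\Spin_{10}$-orbit $\cS'$. Finally, the outer automorphism of $\Spin_{10}$ exchanging the two half-spin representations carries $\cS'\subset\PP\Delta^\vee$ to $\cS\subset\PP\Delta$, so $\cS$ is projectively isomorphic to $\cS^\vee$, which is the assertion.

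There is essentially no obstacle here; the whole content is the reduction "$\Spin_{10}$-stable proper closed subvariety of $\PP\Delta^\vee$ $\Rightarrow$ closed orbit", which is precisely where Proposition \ref{prehom} is used, together with the (standard) appeal to biduality to exclude the degenerate possibility $\cS^\vee=\PP\Delta^\vee$. If one preferred a more computational route, the symmetry of the minimal free resolution \eqref{resolution} already makes the self-duality transparent, but the orbit argument above is the cleanest and is what is meant by the statement being a direct consequence of Proposition \ref{prehom}.
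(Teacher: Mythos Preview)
Your argument is correct and is precisely the intended one: the paper does not spell out a proof but flags the corollary as ``a well-known direct consequence of Proposition \ref{prehom}'', and what you wrote is exactly that deduction --- $\cS^\vee$ is a nonempty proper $\Spin_{10}$-invariant closed subset of $\PP\Delta^\vee$, hence the closed orbit there, which is the other spinor variety. Your care in excluding the degenerate cases (nonemptiness via $\cS$ not being linear, properness via biduality) and your final appeal to the outer automorphism to get projective equivalence are all appropriate and match the paper's subsequent remark identifying $\cS^\vee$ with the spinor variety in the other half-spin representation.
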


To be more precise, the dual variety of the spinor variety $\cS\subset\PP\Delta$ is the other 
spinor variety $\cS^\vee\subset\PP\Delta^\vee$, in the other half-spin representation. 

Note for future use that the self-duality of $\cS$ is preserved at the categorical level, in the sense that $\cS\subset\PP\Delta$ and 
$\cS^\vee\subset\PP\Delta^\vee$ are homologically projectively dual    \cite[Section 6.2]{kuz}. As already mentioned
in the introduction, the derived category 
of coherent sheaves on the spinor variety $\cS$ has a specially nice rectangular Lefschetz decomposition, defined by 
eight translates of the exceptional pair $\langle\cO_S,U^\vee\rangle$.

\smallskip
Another consequence of Proposition \ref{prehom} is that, up to the group action, 
there are only two kinds, up to projective equivalence, of hyperplane sections of $\cS$: the
smooth and the singular ones. Let us briefly describe their geometries. 

\begin{proposition}\label{hssing}
A singular hyperplane section $\cHS_{sing}$ of $\cS$ is singular along a projective space of dimension four.
Moreover $\cHS_{sing}$ admits a cell decomposition and its Poincar\'e polynomial is
$$P_{\cHS_{sing}}(t)=1+t+t^2+2t^3+2t^4+2t^5+2t^6+2t^7+t^8+t^9.$$
\end{proposition}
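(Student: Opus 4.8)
The plan is to analyze a singular hyperplane section $\cHS_{sing}$ of $\cS$ using the homogeneity of the complement $\PP\Delta-\cS$ (Proposition \ref{prehom}), together with the rational parametrization of Proposition \ref{parametrization}. A hyperplane in $\PP\Delta$ is a point of $\PP\Delta^\vee$; by Proposition \ref{prehom} applied to the dual spinor variety $\cS^\vee\subset\PP\Delta^\vee$, the singular hyperplanes form a single $\Spin_{10}$-orbit, namely $\cS^\vee$ itself, so it suffices to study one explicit singular section. I would choose the hyperplane to be the tangent hyperplane at the base point corresponding to $E$; in the coordinates $\Delta=\CC\oplus\wedge^2E^\vee\oplus\wedge^4E^\vee$ of Proposition \ref{parametrization}, the projective tangent space at $[1,0,0]$ is $\PP(\CC\oplus\wedge^2E^\vee)$, and a natural singular hyperplane section is cut out by a linear form vanishing on, say, the $\CC$-summand together with a hyperplane of $\wedge^4E^\vee$ — concretely the section $\{\omega_0=0\}$ after identifying a generic tangent hyperplane with the image of its dual point under the self-duality.

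For the singular locus, I would work with the affine chart: points of $\cS$ in this chart are $[1,\omega,\omega\wedge\omega]$ for $\omega\in\wedge^2E^\vee$, and the remaining points of $\cS$ lie in the hyperplane at infinity, which by the equivariance of the parametrization is again a translate of the stratification. Intersecting $\cS$ with the chosen singular hyperplane and computing where the Jacobian of the defining equations (the equations $\omega_0\omega_4=\omega_2\wedge\omega_2$, $\omega_4*\omega_2=0$ of Proposition \ref{quad} together with the linear form) drops rank should exhibit the singular locus as the set of spinors "compatible" with the vertex of the section; using the $\GL(E)$-action this locus is a closed orbit, and a short weight/dimension count identifies it with a $\PP(E^\vee)\cong\PP^4$. (Equivalently: the singular locus of the tangent section at a point $p\in\cS$ is the union of lines in $\cS$ through $p$, which by the classical fact recalled in point (3) of the introduction is exactly $G(2,5)\dots$ — but here, for this specific section, it collapses to a $\PP^4$; I would verify the precise statement by the explicit Jacobian computation rather than rely on heuristics.)

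For the cell decomposition and the Poincaré polynomial, I would use the $B$-action of a Borel subgroup of $\Spin_{10}$ stabilizing the chosen (singular) hyperplane: since $\cHS_{sing}$ is $B$-stable, the Bruhat cells of $\cS$ either lie in $\cHS_{sing}$ or meet it in a hyperplane, and intersecting the Bialynicki-Birula decomposition of $\cS$ with $\cHS_{sing}$ yields an affine paving. Concretely, $P_\cS(t)=(1+t^3)(1+t+\cdots+t^7)$ has total Euler number $16$, and the expected answer $P_{\cHS_{sing}}(t)=1+t+t^2+2t^3+2t^4+2t^5+2t^6+2t^7+t^8+t^9$ has Euler number $16$ as well — consistent with removing one open cell from the smooth hyperplane section (whose Poincaré polynomial would be $(1+t^3)(1+t+\cdots+t^6)=1+t+t^2+2t^3+t^4+\cdots$, Euler number $14$) and adding back cells supported on the $\PP^4$ of singularities, contributing $1+t+t^2+t^3+t^4$, plus the extra cells in degrees $5,6,7$ coming from the cone structure over the base of the section. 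I would nail this down by listing the strict partitions indexing the cells of $\cS$, tracking which linear functional they are killed by, and bookkeeping the surviving cells degree by degree.

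The main obstacle I anticipate is the combinatorial/geometric bookkeeping in the last step: getting the multiplicities in each degree exactly right requires understanding precisely how the chosen hyperplane interacts with each Schubert cell, i.e. which Schubert varieties are contained in $\cHS_{sing}$ and which are only hyperplane-sectioned. An efficient alternative I would keep in reserve is to compute $P_{\cHS_{sing}}$ via the long exact sequence in cohomology associated to $\cHS_{sing}\subset\cS$ (or to a resolution of singularities of $\cHS_{sing}$ obtained by blowing up the $\PP^4$), using that all odd cohomology vanishes and the hard Lefschetz/Gysin maps between $H^*(\cS)$ and $H^*(\cHS_{sing})$ are controlled by the restriction of the hyperplane class; the correction terms are then forced to be supported on the $\PP^4$ and its normal geometry, which pins down the polynomial.
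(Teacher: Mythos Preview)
Your approach to the singular locus is workable in principle but far more laborious than necessary, and your description of the chosen hyperplane is muddled (the section $\{\omega_0=0\}$ is the hyperplane at infinity, not the tangent hyperplane at $[1,0,0]$; a tangent hyperplane at that point is cut out by a single linear form on the $\wedge^4E^\vee$ factor). The paper bypasses all Jacobian computations via a clean incidence argument: a singular hyperplane corresponds to a point $F\in\cS^\vee$, and by an orbit count on $\cS\times\cS^\vee$ (three orbits, indexed by $\dim(E\cap F)\in\{0,2,4\}$) the hyperplane $H_F$ is tangent to $\cS$ at $E$ exactly when $\dim(E\cap F)=4$. Since such an $E$ is determined by the hyperplane $E\cap F\subset F$, the singular locus is $\PP F^\vee\cong\PP^4$. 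Your hesitation between $\PP^4$ and $G(2,5)$ is a symptom of not having pinned down the right geometric picture.

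The real gap is in the Poincar\'e polynomial. You are missing the one-line argument: a singular hyperplane section of $\cS$ \emph{is} a Schubert divisor (it is the complement of the big Bruhat cell), so it inherits the cell decomposition of $\cS$ minus the top cell, and
\[
P_{\cHS_{sing}}(t)=P_\cS(t)-t^{10}.
\]
Your proposed bookkeeping is both unnecessary and incorrect. First, $P_{\cHS_{sing}}(1)=15$, not $16$; it is $P_\cS(1)-1$, consistent with removing one cell. Second, your heuristic of ``removing one open cell from the smooth section and adding back $1+t+\cdots+t^4$ plus extra cells in degrees $5,6,7$'' does not match the actual difference $P_{\cHS_{sing}}(t)-P_{\cHS_{reg}}(t)=t^7$; there is no cone structure contributing low-degree cells. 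The alternative via long exact sequences and resolutions would eventually get there, but is a detour around a fact that is immediate once you recognize $\cHS_{sing}$ as the Schubert divisor.
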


\begin{proof} Recall that we may consider $\cS$ and $\cS^\vee$ as the two families of maximal isotropic 
subspaces of $V_{10}$. Moreover, if $E$ and $F$ are two maximal isotropic spaces, they belong to the same family if and only if their intersection has odd dimension. Given a point of $\cS$, that we identify, with some abuse, to such an isotropic space $E$, the set of hyperplanes tangent to 
$\cS$ at $E$ defines a subvariety of $\cS^\vee$.

\begin{lemma}\label{tangency}
A\,point $F\in\cS^\vee$\,defines a hyperplane in $\PP\Delta$\,which is tangent to $\cS$\,at $E$,\,if\,and\,only\,if $\dim(E\cap F)=4$.
\end{lemma}

\begin{proof}[Proof of Lemma \ref{tangency}] It is a consequence of Witt's theorem that the action of $\Spin(V_{10})$ on 
$\cS\times \cS^\vee$ has exactly three orbits, characterized by the three possible values 
for the dimension of the intersection (recall that this dimension must be even). 
Therefore the stabilizer of $E$ in $\Spin(V_{10})$ has also three orbits 
in $\cS^\vee$, defined by the three possible values for the dimension of the intersection with $E$.
Geometrically, they have to correspond to the three possible positions with respect to $E$, 
of a hyperplane defined by a point $F\in \cS^\vee$: tangent to $\cS$ at $E$, containing $E$ but
not tangent, or not containing $E$. This implies the claim. 
\hfill $\Box$
\end{proof}

Since an isotropic space $E\in \cS$ such that $\dim(E\cap F)=4$ is uniquely determined by $E\cap F$, the hyperplane defined
by $F$ is tangent to $\cS$ along a subvariety of $\cS$ isomorphic to $\PP F^\vee$. 

For the last assertions, note that a singular hyperplane section of $\cS$ is just a Schubert divisor. By general results 
on the Bruhat decomposition, we know that its complement in $\cS$ is precisely the big cell. So $\cHS_{sing}$ has a cell decomposition given by all the cells of $\cS$ except the big one, and
$P_{\cHS_{sing}}(t)=P_{\cS}(t)-t^ {10}$.
\hfill $\Box$
\end{proof}

\begin{proposition}\label{hsreg}
A smooth hyperplane section $\cHS_{reg}$ of $\cS$ admits a quasi-homo\-ge\-ne\-ous action of its automorphism group,
which is isomorphic to $(\Spin_7\times {\mathbb G}_m)\ltimes {\mathbb G}_a^8$. 
Moreover $\cHS_{reg}$ admits a cell decomposition and its Poincar\'e polynomial is
$$P_{\cHS_{reg}}(t)=1+t+t^2+2t^3+2t^4+2t^5+2t^6+t^7+t^8+t^9.$$
\end{proposition}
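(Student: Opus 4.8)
The plan is to identify the smooth hyperplane section $\cHS_{reg}$ explicitly, using the rational parametrization of $\cS$ from Proposition \ref{parametrization}, and then read off both the automorphism group and the cell decomposition from that model. First I would fix a point $p\in\cS$ corresponding to a maximal isotropic subspace $E$, so that by Proposition \ref{parametrization} we have $\Delta=\CC\oplus\wedge^2E^\vee\oplus\wedge^4E^\vee$ with $\cS$ the closure of the image of $\omega\mapsto[1,\omega,\omega\wedge\omega]$. A smooth hyperplane section corresponds, by Lemma \ref{tangency} together with Proposition \ref{hssing}, to a point $F\in\cS^\vee$ \emph{not} tangent to $\cS$, i.e.\ with $\dim(E\cap F)\in\{0,2\}$; the non-tangent, non-containing case $\dim(E\cap F)=0$ is the generic one and its hyperplane is the smooth section. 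Unwinding the Clifford pairing $\Delta\otimes\Delta^\vee\ra\CC$, such a hyperplane is (up to scalar) the linear form $[\omega_0,\omega_2,\omega_4]\mapsto \omega_0\cdot\mathrm{vol}_E$ reading off the top-degree component $\omega_4\in\wedge^4E^\vee$ against a fixed volume form, or more symmetrically one coming from a nondegenerate pairing; the precise normalization I would work out, but the upshot is that $\cHS_{reg}$ has an open cell $\wedge^2E^\vee$ minus a hypersurface, hence the group theory must come from the stabilizer of the pair $(E,F)$ in $\Spin_{10}$.

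Second, for the automorphism group: the stabilizer in $\Spin_{10}$ of a flag consisting of $E\in\cS$ and a transverse $F\in\cS^\vee$ is the Levi subgroup $\GL_5$ (this is the parabolic Levi from the remark after Proposition \ref{parametrization}); but $\cHS_{reg}$ is the section by the hyperplane attached to $F$ alone, so I want the stabilizer of $F\in\cS^\vee$, which is a maximal parabolic $P\subset\Spin_{10}$. The reductive part of $P$ is $\GL_5$ again, and $\mathrm{Aut}(\cHS_{reg})$ should be (the image in $\PGL(\Delta)$ of) the subgroup of this parabolic preserving the additional structure, namely the hyperplane through which we cut — but here one must be careful: automorphisms of $\cHS_{reg}$ need not extend to $\Spin_{10}$ a priori. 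The claimed answer $(\Spin_7\times\mathbb{G}_m)\ltimes\mathbb{G}_a^8$ strongly suggests that the right model is: $\cS$ minus a tangent hyperplane section is the affine space $\wedge^2E^\vee\cong\CC^{10}$, and the smooth section $\cHS_{reg}$ is a compactification of the affine quadric $\{q(\omega)=0\}$ of rank... — in fact $\Spin_7\ltimes\mathbb{G}_a^8$ is exactly the automorphism group pattern of an $8$-dimensional affine quadric (spinor/octonion geometry: $\Spin_7$ acts on $\CC^8$ preserving a quadratic form, the $\mathbb{G}_a^8$ being translations of the affine cone picture, $\mathbb{G}_m$ the scaling). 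So I would argue that removing from $\cHS_{reg}$ its own singular-hyperplane-at-infinity realizes it as a compactified affine quadric eightfold, compute the automorphisms of that (classical: conformal group of the quadric restricted to those fixing the point at infinity, giving the affine group of a quadratic form, which is $(\Spin_7\times\mathbb G_m)\ltimes\mathbb G_a^8$ — note $\SO_8$ acting on $\CC^8$ but the stabilizer of an isotropic line in the conformal $\SO_{10}$ picture collapses one $\SO_8$ factor to $\Spin_7$ via triality-flavored phenomena), and then check no further automorphisms appear, e.g.\ by a cohomology/deformation argument ($H^0(\cHS_{reg},T_{\cHS_{reg}})$ has the predicted dimension $21+1+8=30$, wait — $\dim\Spin_7=21$, plus $1+8=30$) matching the Lie algebra dimension, so the group is connected of the right dimension and its structure is forced. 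I would verify $h^0(T)$ via the normal bundle sequence $0\ra T_{\cHS}\ra T_\cS|_{\cHS}\ra \cO_{\cHS}(1)\ra 0$ and the known cohomology of $T_\cS=\wedge^2U^\vee$ twisted appropriately.

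Third, for the Poincaré polynomial: I would produce the cell decomposition directly. The quasi-homogeneous action gives an open orbit (a cell of dimension $9$) together with a boundary; alternatively, intersect the Bruhat cell decomposition of $\cS$ with the smooth hyperplane and track which cells survive. The cleanest route: $\cHS_{reg}$ contains the affine open quadric $Q^8_{\mathrm{aff}}\subset\mathbb{A}^{9}$ — no, dimension $9$ variety — as the big orbit; actually since $\cHS_{reg}$ has a $(\Spin_7\times\mathbb G_m)\ltimes\mathbb G_a^8$-action with finitely many orbits (quasi-homogeneity plus the structure forces finiteness), each orbit is a homogeneous space hence has known (cell-like) cohomology, and the closure relations assemble the Betti numbers. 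Equivalently and most concretely: $\cHS_{reg}\cong$ a $\PP^1$-bundle-like degeneration, or better, it is the blow-down/section whose motive I can compute as $[\cHS_{reg}]=[\cS]-[\cHS_{sing}]$... no, that is not how hyperplane sections work. Instead, use the pencil: a generic pencil of hyperplanes gives $[\cS]=[\cHS_\lambda]\cdot[\PP^1]-(\text{correction from base locus})$ is also not it. The honest method is Lefschetz plus the self-dual resolution (\ref{resolution}): compute the Hodge/Betti numbers of $\cHS_{reg}$ from those of $\cS$ (known via $P_\cS$) using the Lefschetz hyperplane theorem in degrees $\le 8$ and hard Lefschetz/Poincaré duality for the middle, leaving only the primitive middle cohomology in degree... $\dim\cHS_{reg}=9$ is odd, so the interesting degree is $9$, and $b_9$ is pinned down by $\chi(\cHS_{reg})$. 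The Euler characteristic I get from $\chi(\cHS_{reg})=\chi(\cS)-\chi(\cS\cap H')$ for a second generic hyperplane, iterating, or simply from $\chi(\cHS_{reg})=\sum(\text{sizes of the finitely many orbits as cells})$ once the orbit structure is known; comparing with the claimed $P_{\cHS_{reg}}(1)=14$ and $P_{\cHS_{sing}}(1)=16$ (from Proposition \ref{hssing}, total $=P_\cS(1)=16$, consistent since removing a cell drops Euler char by $1$: $16\to 15$? — I'd recheck: $P_\cS(1)=2\cdot 8=16$, and $P_{\cHS_{sing}}(1)=1+1+1+2+2+2+2+2+1+1=15$, while $P_{\cHS_{reg}}(1)=1+1+1+2+2+2+2+1+1+1=14$) gives a clean check.

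\textbf{Main obstacle.} The hard part is the automorphism group: proving it is \emph{exactly} $(\Spin_7\times\mathbb{G}_m)\ltimes\mathbb{G}_a^8$ rather than merely containing it. Containment and the quasi-homogeneity are comparatively easy once $\cHS_{reg}$ is identified with the right compactified affine quadric; but ruling out extra automorphisms requires either a careful cohomology computation showing $h^0(\cHS_{reg},T_{\cHS_{reg}})=30$ with the correct Lie-algebra structure (so that the identity component is forced) plus a separate argument that $\mathrm{Aut}$ is connected, or an appeal to the fact that $\cHS_{reg}$ is Fano of index $7$ with Picard rank one and a classification of such. The identification of $\cHS_{reg}$ as an octonionic/$\Spin_7$-geometry object — essentially the projective closure of the affine quadric cut out by the norm form, with $\Spin_7$ entering through its $8$-dimensional spin representation rather than $\SO_8$ entering through the vector representation — is the conceptual crux, and I expect it to come from Igusa's analysis of spinor orbits (\cite{igusa}) restricted to the relevant hyperplane.
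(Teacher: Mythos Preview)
Your proposal and the paper diverge sharply in method. The paper does essentially no direct computation for the first two claims: it simply cites Fu--Hwang \cite[Proposition 3.9]{fw} for the identification of $\Aut(\cHS_{reg})\simeq(\Spin_7\times\mathbb{G}_m)\ltimes\mathbb{G}_a^8$ and for quasi-homogeneity, and then observes that $\cHS_{reg}$ is the horospherical variety appearing as case~4 in Pasquier's classification \cite[Theorem 1.7]{pas}; horospherical varieties automatically admit algebraic cell decompositions, which disposes of that claim. For the Poincar\'e polynomial the paper invokes the Lefschetz hyperplane theorem --- exactly the route you eventually settle on after your several discarded attempts --- with Lefschetz giving $b_{2i}(\cHS_{reg})=b_{2i}(\cS)$ for $i\le 4$, Poincar\'e duality the rest, and the cell decomposition forcing the odd Betti numbers to vanish.

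Your direct approach is not wrong in spirit, but the part you flag as the \emph{main obstacle} --- pinning down $\Aut(\cHS_{reg})$ exactly --- is a real gap in your sketch, and it is precisely the content that the paper outsources to \cite{fw}. Your proposed identification of $\cHS_{reg}$ with a compactified affine $8$-quadric carrying a $\Spin_7$-action is morally the right picture (it is what underlies the horospherical description), but your argument for it is heuristic: you have not actually exhibited the open orbit, nor verified that the conformal/affine-quadric automorphisms you describe extend to $\cHS_{reg}$ and exhaust $\Aut$. The cohomology computation $h^0(T_{\cHS_{reg}})=30$ you propose would help, but you would still need connectedness of $\Aut$ and the Lie-algebra identification, none of which you carry out. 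The key structural fact you are missing is that $\cHS_{reg}$ is \emph{horospherical}: once one knows this (either by recognizing it in Pasquier's list or by checking that the open orbit is a torus bundle over a flag variety), both the cell decomposition and the shape of the automorphism group follow from general theory, bypassing the ad hoc quadric model entirely.
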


\begin{proof} For the first statement, we refer to \cite[Proposition 3.9]{fw}. As observed in 
\cite{fw},  $\cHS_{reg}$ coincides with the horospherical variety that appears as case $4$ of   
\cite[Theorem 1.7]{pas}. In particular, being horospherical it admits an algebraic
cell decomposition. 
Finally the Betti numbers are given by the Lefschetz hyperplane theorem.
\hfill $\Box$
\end{proof}

\section{Double spinor varieties}

In this section we introduce our main objects of interest, the {\it double spinor varieties} 
$$X=\cS_1\cap \cS_2,$$
where $\cS_1=g_1\cS$ and $\cS_2=g_2\cS$ are translates of $\cS$ by $g_1, g_2\in \PGL(\Delta).$
Up to projective equivalence, we can of course suppose that $X=\cS\cap g\cS$ for  $g\in \PGL(\Delta).$

By the Eagon-Northcott generic perfection theorem \cite[Theorem 3.5]{bv}, 
the resolution (\ref{resolution}) gives a free resolution of $\cO_X$ as an $\cO_{\cS_1}$-module: 
\begin{equation}\label{resolutionX}
0\ra \cO_{\cS_1}(-8)\ra V_{10}(-6)\ra \Delta^\vee(-5)\ra \Delta(-3)\ra V_{10}(-2)\ra\cO_{\cS_1}\ra \cO_X\ra 0.
\end{equation}

\subsection{Local completeness}

Let $G=\PGL(\Delta)$, with its subgroup $H=\Aut(\cS)\simeq \PSO_{10}$ (as follows from \cite{demazure}). 
The family of double spinor varieties is
by definition the image of a rational map 
$$\xi : G/H\times G/H \dashrightarrow Hilb(\PP\Delta),$$
where $\xi (g_1,g_2)=g_1\cS\cap g_2\cS$.
Moreover the diagonal left action of $G$ is by projective equivalence, hence factors out when we 
consider local deformations of a given $X$. At the global level, the quotient $[(G/H\times G/H)/G]$
should be thought of as the moduli stack of double spinor varieties. One could reproduce the analysis of the
similar stack made in \cite{bcp} for the double Grassmannians, but we will not do that. We will only check the 
local completeness of our family. 

\begin{proposition}\label{complete}
The family of smooth double spinor varieties is locally complete. 
\end{proposition}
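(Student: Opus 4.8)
The plan is to show that the Kodaira–Spencer map of the family $\xi$ is surjective at a general point, i.e. that every first-order deformation of $X=\cS_1\cap\cS_2$ inside its own moduli comes from moving the two translates $\cS_1,\cS_2$. Concretely, I would compute $H^1(X,T_X)$ and compare it with the tangent space to $G/H\times G/H$ at $(g_1,g_2)$ modulo the diagonal $G$-action, which has dimension $2\dim G - \dim H - \dim G = \dim G - \dim H = \dim\PGL(\Delta)-\dim\PSO_{10} = (16^2-1)-45 = 255-45 = 210$. The differential of $\xi$ at $(\mathrm{id},g)$ sits in the normal-bundle exact sequences: infinitesimally moving $\cS_1$ (resp.\ $\cS_2$) contributes the image of $H^0(\PP\Delta, T_{\PP\Delta})\to H^0(\cS_i, N_{\cS_i/\PP\Delta})$, and then restricting to $X$ and using $N_{X/\cS_i}\cong N_{\cS_j/\PP\Delta}|_X$ (transverse intersection), the deformation of $X$ is governed by the composite with $H^0(X, N_{X/\cS_1}\oplus N_{X/\cS_2})\to H^1(X,T_X)$ coming from the two sequences $0\to T_X\to T_{\cS_i}|_X\to N_{X/\cS_i}\to 0$.

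First I would set up the two conormal sequences and the standard diagram relating $T_X$, $T_{\cS_1}|_X$, $T_{\cS_2}|_X$ and $T_{\PP\Delta}|_X$; since $\cS_1\cap\cS_2$ is transverse, $N_{X/\PP\Delta}\cong N_{\cS_1/\PP\Delta}|_X\oplus N_{\cS_2/\PP\Delta}|_X$, and by Proposition \ref{normal} each $N_{\cS_i/\PP\Delta}|_X\cong U_i(2)|_X$ where $U_i$ is the tautological bundle pulled back from $\cS_i$. Then I would chase cohomology: the obstruction to surjectivity of Kodaira–Spencer lies in $H^1(X, T_{\PP\Delta}|_X)$ and in the cokernels of $H^0(\cS_i,N_{\cS_i/\PP\Delta})\to H^0(X,N_{\cS_i/\PP\Delta}|_X)$. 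The first vanishes by Bott vanishing / Euler sequence computations on $\PP^{15}$ restricted to $X$ using the Koszul resolution (\ref{resolutionX}) of $\cO_X$ on $\cS_1$, which reduces everything to known cohomology of homogeneous bundles twisted by $\cO(-k)$ on the spinor tenfold. The restriction maps $H^0(\cS_i, U_i(2))\to H^0(X, U_i(2)|_X)$ are surjective for the same Koszul reason: the kernel/cokernel are controlled by $H^{\ge 1}$ of $U_i(2)\otimes(\text{terms of the resolution of }\cO_X)$, each of which is a twist of a homogeneous bundle by a sufficiently negative line bundle to force vanishing by Borel–Weil–Bott.

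The main obstacle I anticipate is twofold. First, one must verify that the image of the differential of $\xi$ actually \emph{fills} $H^1(X,T_X)$ and not merely a subspace — equivalently, that the connecting maps $H^0(X,N_{X/\cS_i})\to H^1(X,T_X)$ together surject, which amounts to showing $H^1(X, T_{\cS_1}|_X\oplus T_{\cS_2}|_X)=0$ or at least that the relevant piece vanishes; this requires care because $T_{\cS_i}\cong\wedge^2 U_i^\vee$ and its restriction to $X$ must be analyzed via the Koszul complex, and the length-five Koszul resolution produces many terms. Second, one has to handle the $\PGL(\Delta)$ versus $\Spin_{10}$ bookkeeping correctly, subtracting off the $\mathfrak{pgl}(\Delta)$-directions that act trivially on the projective-equivalence class and checking the diagonal-$G$ stabilizer of a general pair is finite (which follows since $\Aut(X)$ is finite for a Calabi–Yau of Picard number one, once we know $X$ is Calabi–Yau — already established). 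I would finish by a dimension count: $\dim H^1(X,T_X)\le 210$ from the general theory (the family has at most that many moduli), combined with the surjectivity just proved, gives equality and hence local completeness.
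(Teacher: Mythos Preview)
Your approach is essentially the paper's: the crucial vanishing is $H^1(X,T_{\cS_i}|_X)=0$, obtained by tensoring the resolution~(\ref{resolutionX}) with $T\cS_i\simeq\wedge^2U_i^\vee$ and applying Bott (or Kodaira--Nakano) on $\cS$, which makes the connecting map $H^0(X,N_{X/\cS_i})\to H^1(X,T_X)$ surjective; the paper then checks that $H^0(\PP\Delta,T_{\PP\Delta})$ already surjects onto $H^0(X,N_{\cS_2/\PP\Delta}|_X)\simeq H^0(X,N_{X/\cS_1})$ via the Euler sequence together with the same vanishing for $i=2$.

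Two remarks. First, the paper is more economical: it uses only \emph{one} of the two normal sequences, showing that moving $\cS_2$ alone (by the full $\PGL(\Delta)$) surjects onto $H^1(X,T_X)$; your symmetric treatment with both $\cS_1$ and $\cS_2$ works but is redundant. Second, your closing paragraph is confused and should be dropped: the inequality $\dim H^1(X,T_X)\le 210$ does \emph{not} follow from the parameter space having dimension $210$ (that bounds the image of the Kodaira--Spencer map, not its target), and once surjectivity is established you already have local completeness --- no dimension count or finiteness of $\Aut(X)$ is needed. In fact $h^1(T_X)=h^{4,1}=165$, so the Kodaira--Spencer map has a positive-dimensional kernel.
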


\begin{proof}
We first observe that $H^1(X,T\cS_{1|X})=0$. Because of (\ref{resolutionX}), 
this is a direct consequence of the vanishing of 
$H^1(\cS_1,T\cS_1)$, of $H^q(\cS_1,T\cS_1(-k))$ for $0<k<8$ and $q>1$, and of 
$H^6(\cS_1,T\cS_1(-8))$, which are all consequences of Bott's theorem. (Alternatively, 
the vanishing of  $H^q(\cS_1,T\cS_1(-k))$
follows from the Kodaira-Nakano vanishing theorem,
since this group is Serre dual to $H^{10-q}(\cS_1,\Omega_{\cS_1}(k-8))$.)
Hence the map 
$$H^0(X,N_{X/\cS_1})\simeq H^0(X,N_{\cS_2/\PP|X})\lra H^1(X,TX)$$
is surjective. Here we abbreviated $\PP\Delta$ by $\PP$. What remains to prove is that the composition 
$$H^0(\PP,T\PP)\stackrel{r}{\lra} H^0(X,T\PP_{|X})\stackrel{s}{\lra} H^0(X,N_{\cS_2/\PP|X})$$
is also surjective. In order to prove that $r$ is surjective, it is convenient to use the 
Euler exact sequence on $\PP$ and its restriction to $X$; from (\ref{resolutionX}) we easily get 
that $H^1(X,\cO_X)=0$ and $H^0(\PP, \cO_\PP(1))\simeq H^0(X,\cO_X(1))$, and the surjectivity of $r$
readily follows. 
The surjectivity of $s$ follows from the vanishing of $H^1(X,T\cS_{2|X})$, which we already verified.
\hfill $\Box$
\end{proof}

\medskip
The following observation (already made in \cite[Proposition 4.7]{kap} for the double 
Grassmannian varieties) will be useful: when $g$ goes to identity,  $X=\cS\cap g\cS$ deforms 
smoothly to the zero locus in $\cS$ of a global section 
of its normal bundle $\wedge^4U^\vee=U(2)$. In our situation the discussion of 
\cite[section 5]{or} applies almost verbatim. 

The normal bundle is generated by global sections, being homogeneous 
and irreducible, with $$H^0(\cS,\wedge^4U^\vee)=\wedge^4V_{10}^\vee=V_{\omega_4+\omega_5}$$
by the Borel-Weil theorem. So the zero locus of a general section, which we call a normal degeneration, 
is a smooth Calabi-Yau fivefold which is  deformation equivalent to the smooth double spinor varieties; in particular the family of 
those zero-loci  is not locally complete, something that seems to be quite exceptional.

\paragraph{\bf Remark.}
Note that other kinds of degenerations of double Grassmannians, 
this time singular, were considered in \cite{gp}: typically, they are joins of 
two elliptic quintics (which are linear sections of the Grassmannian) in two disjoint $\PP^4$'s in $\PP^9$. 
Such degenerations were studied in connection with the Horrocks-Mumford vector bundle, in order to describe the 
moduli space of $(1,10)$-polarized abelian surfaces. It would certainly be interesting to study the similar 
story in our setting. The analogous singular degenerations are of course the joins of two K3 surfaces of degree $12$ 
(which are linear sections of the spinor variety) in two disjoint $\PP^7$'s in $\PP^{15}$.

\subsection{Invariants}

\begin{proposition}\label{cy}
Any smooth double spinor variety $X=\cS_1\cap \cS_2$ (of the expected dimension) 
is a Calabi-Yau fivefold. 
Moreover:
\begin{enumerate}
\item $\Pic(X)=\ZZ\cO_X(1)$, and $H^p(X,\Omega^p_X)=\CC$ for $0\le p\le 5$; 
\item $H^q(X,\Omega^p_X)=0$ for $p\ne q$ and $p+q\ne 5$;
\item $H^5(X,\ZZ)$ is torsion free. 
\end{enumerate}
\end{proposition}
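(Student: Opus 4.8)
The plan is to deduce all the stated properties from the resolution (\ref{resolutionX}) together with the Lefschetz-type hyperplane theorems, using the fact that $X$ is a linear-sectional degeneracy locus cut inside $\cS_1$ by the translated spinor variety $\cS_2$, whose ideal sheaf on $\PP\Delta$ is resolved by (\ref{resolution}). The Calabi-Yau property itself is the first thing to pin down: since $\cS_1$ has index $8$ and $X=\cS_1\cap\cS_2$ has codimension $5$ in $\cS_1$, adjunction gives $K_X=K_{\cS_1}(5)|_X=\cO_{\cS_1}(-8+5)(?)$ — one has to be careful, but the point is $\cS_2$ contributes $\cO(5)$ via its resolution length (five linear-degree steps), so $K_X\simeq\cO_X$; smoothness of the expected dimension is assumed. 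Then $X$ has dimension five. First I would record that $H^i(X,\cO_X)=0$ for $0<i<5$: this follows by tensoring (\ref{resolutionX}) with $\cO_{\cS_1}$ and chasing cohomology, using Bott vanishing on $\cS_1$ for the twists $\cO_{\cS_1}(-k)$ with $0\le k\le 8$ and the known $H^\bullet(\cS_1,\cO_{\cS_1})$ — alternatively it is immediate from $K_X\simeq\cO_X$ plus rationality-type vanishing, or Serre duality. Together with $\dim X=5$ and $h^{0,0}=h^{5,0}=1$ this gives the Hodge numbers $h^{0,i}$.

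Next I would handle (1) and (2), the Hodge-theoretic statements. The inclusion $X\hookrightarrow\cS_1$ is the intersection of $\cS_1$ with the five ``hypersurface-like'' conditions coming from the resolution of $\cO_{\cS_2}$; more usefully, one can realize $X$ as an iterated section and invoke a Lefschetz hyperplane theorem for such nef/ample degeneracy loci (as in \cite{or,bcp}), or observe that $\cS_2$ being a translate of $\cS$ and the bundle $\wedge^4U^\vee=U(2)$ being ample-enough lets one apply a Sommese-type vanishing. The upshot is that the restriction maps $H^k(\cS_1,\QQ)\to H^k(X,\QQ)$ are isomorphisms for $k<5=\dim X$ and injective for $k=5$. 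Since $\cS_1\cong\cS$ has $P_\cS(t)=(1+t^3)(1+t+\cdots+t^7)$ with only $(p,p)$-classes, we get $H^k(X)=H^k(\cS)$ for $k\le 4$, all of Hodge type $(k/2,k/2)$ (so $0$ in odd degree $\le 4$, $\CC$ in degrees $0,2,4$), which gives $H^p(X,\Omega^p_X)=\CC$ for $p=0,1,2$ and, by Serre duality on the Calabi-Yau $X$, also for $p=3,4,5$; and $H^q(X,\Omega^p_X)=0$ for $p\ne q$, $p+q\ne 5$. In particular $b_1(X)=b_3(X)=0$ in the ranges forced, and $\Pic(X)$ has rank one generated by $\cO_X(1)$: rank one because $H^2(X,\QQ)=\QQ$, and $\cO_X(1)$ is primitive since $\cO_{\cS_1}(1)$ generates $\Pic(\cS_1)=\ZZ$ and restricts compatibly (again Lefschetz, now with $\ZZ$ coefficients, or because $H^1(X,\cO_X)=H^2(X,\cO_X)=0$ makes $\Pic(X)=H^2(X,\ZZ)$).

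For (3), torsion-freeness of $H^5(X,\ZZ)$, the strategy is to use universal coefficients together with torsion-freeness of $H^k(\cS_1,\ZZ)$ for all $k$ (the Chow ring / Bruhat cells are free, so $\cS$ has torsion-free integral cohomology concentrated in even degrees). By the integral Lefschetz theorem for the pair $(\cS_1,X)$, $H^k(X,\ZZ)\cong H^k(\cS_1,\ZZ)$ for $k\le 4$ — in particular torsion-free and even — and $H^5(X,\ZZ)$ injects into something controlled by the relative cohomology; one then argues that $H^5(X,\ZZ)$ has no torsion by a Poincaré-duality/universal-coefficients comparison: torsion in $H^5$ would force torsion in $H_4(X,\ZZ)\cong H^{6}(X,\ZZ)$ (Poincaré duality on the $10$-manifold $X$), hence in $H^6(X,\ZZ)$, which by Lefschetz-duality and the low-degree isomorphisms is again pinned to the torsion-free $H^4(\cS_1,\ZZ)$; running the torsion around the diagram kills it. I expect this last point — getting the integral statement in the single middle degree $5$, as opposed to the rational or low-degree statements — to be the main obstacle, since it requires either an integral version of the Lefschetz theorem applicable to our non-linear section (one should cite the relevant enhancement used in \cite{or} or \cite{bcp}), or a direct topological argument via a Lefschetz pencil degenerating $X$ to the normal degeneration of the previous subsection, whose vanishing cycles one controls. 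Everything else reduces to Bott vanishing on $\cS$ applied to the explicit resolution (\ref{resolutionX}), which is routine.
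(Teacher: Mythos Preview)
Your overall strategy---the resolution (\ref{resolutionX}) plus a Lefschetz-type theorem for the pair $(\cS_1,X)$---is exactly the paper's. But two steps are genuinely wrong or incomplete. First, the adjunction computation: $\cS_2$ does not contribute $\cO(5)$. The normal bundle $N_{X/\cS_1}$ is the restriction of $N_{\cS_2/\PP\Delta}\simeq U(2)$ (Proposition \ref{normal}), so $\det N_{X/\cS_1}=\det(U)\otimes\cO(10)=\cO(-2+10)=\cO(8)$, and then $K_X=\cO_X(-8)\otimes\cO_X(8)=\cO_X$. The coincidence ``resolution length $=5=$ codimension'' is a red herring for the twist.

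Second, your handling of the integral statement (3) is the gap you yourself flag, and the paper does not resolve it by Poincar\'e-duality gymnastics or pencil degenerations. It applies \cite[Corollary b)]{sommese} directly to $A=\cS_1$ and $B=\cS_2$ in $\PP^{15}$, obtaining the vanishing of the relative \emph{homotopy} groups $\pi_i(\cS_1,X)=0$ for $i\le 5$. This single input gives everything at once: $X$ is simply connected (which you never address, but which the paper needs for the Bogomolov decomposition and hence for the word ``Calabi-Yau''), and after passing to homology, $H_i(\cS_1,X,\ZZ)=0$ for $i\le 5$. In particular $H_4(X,\ZZ)\cong H_4(\cS_1,\ZZ)$ is torsion free, and by universal coefficients the torsion of $H^5(X,\ZZ)$ equals that of $H_4(X,\ZZ)$, so it vanishes. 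Your detour through $H^6(X,\ZZ)$ ``pinned by Lefschetz-duality to $H^4(\cS_1,\ZZ)$'' is not a valid step as written: Lefschetz only controls $H^k(X)$ for $k\le 5$, and there is no direct comparison between $H^6(X,\ZZ)$ and any cohomology of $\cS_1$; the correct move is to stay on the homology side in degree $4$.
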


\begin{proof}
The hypothesis that $X$ is smooth of the expected dimension is equivalent to the fact 
that $\cS_1$ and $\cS_2$ meet transversely. Then $X$ 
has dimension five. Suppose to simplify notations that $\cS_1=\cS$. 

Since $\omega_S=\cO_S(-8)$, the line bundles $\cO_\cS(-k)$ are acyclic for $0<k<8$. 
Moreover $h^q(\cO_\cS(-8))=0$ for $q\le 9$. Then (\ref{resolutionX}) yields that
$h^0(\cO_X)=1$, so that $X$ is connected. 
Moreover, 
the relative dualizing sheaf 
$$\omega_{X/\cS}=\det N_{X/\cS}= \det N_{\cS_2/\PP|X}=\cO_X(8),$$
and since $\omega_{\cS}=\cO_\cS(-8)$, 
we conclude that $X$ has trivial canonical bundle.

As was done in \cite[Lemma 3.3]{or}, we now apply  \cite[Corollary b)]{sommese}
to $A=\cS_1$ and $B=\cS_2$ in $\PP^{15}$: we get that the relative 
homotopy groups $\pi_i(\cS_1,X)=0$  for $i\le 5$. In particular $X$ is simply connected, and by the Bogomolov decomposition theorem, it is Calabi-Yau.  Moreover, after passing from homotopy to homology, we deduce that $H_i(\cS_1,X,\ZZ)=0$  for $i\le 5$. Since 
the cohomology of $\cS_1$ is pure and torsion free,  this implies the remaining assertions.
\hfill $\Box$
\end{proof}

Knowing the Hilbert polynomial of $\cS$, we also deduce that the Hilbert polynomial of $X$ is
$$H_X(k)=\frac{2}{5}k(k^2+1)(3k^2+17).$$

\begin{proposition}
The non zero Hodge numbers of a smooth double spinor variety are $h^{p,p}=1$ for $0\le p\le 5$, and 
$$h^{0,5}=h^{5,0}=1, \quad h^{1,4}=h^{4,1}=165, \quad h^{2,3}=h^{3,2}= 7708.$$ 
\end{proposition}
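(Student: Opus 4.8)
The plan is to compute the Hodge numbers of $X$ via the free resolution (\ref{resolutionX}), exactly as the proof of Proposition \ref{cy} exploited it to control $h^0(\cO_X)$ and the canonical bundle. By the previous Proposition we already know $h^{p,p}=1$ for $0\le p\le 5$ and that all off-Hodge-diagonal cohomology vanishes except along the anti-diagonal $p+q=5$; by the Calabi-Yau property (Serre duality plus $H^0(X,\omega_X)=\CC$) we have $h^{0,5}=h^{5,0}=1$. So the only genuinely new content is the pair of numbers $h^{1,4}=h^{4,1}$ and $h^{2,3}=h^{3,2}$, and for these it suffices to compute the topological Euler characteristic $\chi_{top}(X)$ together with one more Hodge-theoretic invariant, for instance $h^{1,4}=h^1(X,\Omega^4_X)=h^4(X,\Omega^1_X)$, since
$$\chi_{top}(X)=2(1+1+1)-2h^{1,4}+2h^{2,3}+2,$$
wait — more carefully, $\chi_{top}(X)=\sum_{p,q}(-1)^{p+q}h^{p,q}=\bigl(\sum_{p}h^{p,p}\bigr)-2h^{0,5}-2h^{1,4}-2h^{2,3}$, which after substituting the known values reduces the problem to determining two integers from two equations.

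First I would compute $\chi_{top}(X)$. The cleanest route uses the normal degeneration already introduced: $X$ is deformation equivalent to the zero locus $Z\subset\cS$ of a general section of $\cN=\wedge^4U^\vee=U(2)$, a rank-five bundle on the ten-dimensional $\cS$, so $\chi_{top}(X)=\chi_{top}(Z)=\int_\cS c_5(\cN)\cdot c(T\cS/\cN)$, i.e.\ $\chi_{top}(Z)=\int_Z c_{top}(TZ)$ with $TZ$ fitting in $0\to TZ\to T\cS|_Z\to \cN|_Z\to 0$. Concretely $\chi_{top}(Z)=\int_\cS \frac{c(T\cS)}{c(\cN)}c_5(\cN)$. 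All the relevant Chern classes are expressible in the Chern classes of $U$ (recall $T\cS=\wedge^2U^\vee$, $\det U=\cO_\cS(-2)$, $\cN=U(2)$) on the Chow ring of $\cS$, whose structure is classical (Schubert calculus on $OG_+(5,10)$, with $\deg\cO_\cS(1)^{10}=12$). This is a finite, purely mechanical intersection-number computation. Alternatively, and as a check, one can run the analogous computation for a complete-intersection-type Euler characteristic formula directly on $X=\cS\cap g\cS$ using the resolution (\ref{resolutionX}) and the known Chern character of $T\cS$; or extract it from the virtual Poincaré polynomial via (\ref{resolutionX}). I would also cross-check against the Hilbert polynomial $H_X(k)=\tfrac{2}{5}k(k^2+1)(3k^2+17)$, whose leading term recovers $\deg X=12$ and consistency of the numerics.

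Second I would pin down one Hodge number, say $h^1(X,\Omega^4_X)$, equivalently (Serre duality) $h^1(X,\mathcal{O}_X(-?))$-type groups — more usefully $h^4(X,\Omega^1_X)=h^1(X,T_X)$ by Calabi-Yau Serre duality, and $h^1(X,T_X)$ is exactly the dimension of the space of first-order deformations of $X$. By Proposition \ref{complete} the family of double spinor varieties is locally complete, so $h^1(X,T_X)$ equals the dimension of the local moduli, which is $\dim(G/H\times G/H)-\dim G=2(\dim\PGL_{16}-\dim\PSO_{10})-\dim\PGL_{16}=\dim\PGL(\Delta)-2\dim\Aut(\cS)=255-2\cdot 45=165$, provided the differential of $\xi$ is generically of maximal rank — but this is precisely what the proof of local completeness establishes (the relevant cohomological surjectivity was proved there), combined with the fact that the generic stabilizer of the $G$-action on the Hilbert scheme is finite (which follows from $\Aut(X)$ being finite for such a Calabi-Yau of Picard rank one, or can be seen directly). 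Once $h^{1,4}=165$ is in hand, the Euler characteristic equation gives $h^{2,3}$. I would double-check $h^{1,4}=165$ intrinsically from (\ref{resolutionX}): twisting by $\Omega^1_\cS$ and chasing Bott-vanishing for $\Omega^1_\cS(-k)$ reduces $H^4(X,\Omega^1_X)$ to an explicit representation-theoretic computation on $\cS$, and the number $165=\dim V_{10}+\dim\wedge^{?}$ — in fact $165=\binom{11}{2}+\dim\Delta-16=55+\dots$ should match a clean weight-multiplicity count; having two independent derivations of the same number is reassuring.

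The main obstacle is the Euler characteristic computation: it requires either a careful Schubert-calculus evaluation of degree-ten intersection numbers on the ten-dimensional spinor variety $\cS$ (pushing forward $c(T\cS)c_5(\cN)/c(\cN)$, a polynomial of degree $10$ in the Chern roots of $U$), or an equally careful manipulation of the resolution (\ref{resolutionX}) to extract the topological Euler characteristic of $X$ — neither is conceptually hard but both are error-prone, so I would do both and demand agreement. A secondary, softer obstacle is making the moduli-count for $h^{1,4}$ rigorous: one must know that $\xi$ is generically finite onto its image (equivalently that the generic double spinor variety has only finitely many projective automorphisms preserving the pair $(\cS_1,\cS_2)$), which should follow from the infinitesimal computation already present in the proof of Proposition \ref{complete} together with the surjectivity of $H^0(\PP,T\PP)\to H^1(X,T_X)$ established there — i.e.\ that map being surjective with $15$-dimensional... rather $\dim\fpgl(\Delta)=255$-dimensional source and the kernel being exactly the Lie algebra of the stabilizer.
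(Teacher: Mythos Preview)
Your approach is correct in outline and genuinely different from the paper's. The paper does not compute $\chi_{top}(X)$ at all, nor does it invoke the moduli dimension; instead it works entirely sheaf-theoretically on the normal degeneration $Z\subset\cS$ (zero locus of a general section of $U(2)$), using the conormal sequence $0\to U_Z^\vee(-2)\to\Omega_{\cS|Z}\to\Omega_Z\to 0$ and its exterior square to express $\chi(\Omega_Z)$ and $\chi(\Omega^2_Z)$ as alternating sums of Euler characteristics of homogeneous bundles on $\cS$, which are then evaluated via the Koszul resolution and Borel--Weil--Bott. From these two Euler characteristics one reads off $h^{1,4}=1+\chi(\Omega_X)$ and $h^{2,3}=1-\chi(\Omega^2_X)$ directly.

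Your route --- $\chi_{top}$ by a Chern-class integral on $\cS$, and $h^{1,4}=h^1(T_X)$ by the moduli count $\dim\PGL(\Delta)-2\dim\PSO_{10}=255-90=165$ --- is attractive because the second number falls out almost for free once local completeness is in hand. One caveat: local completeness gives only \emph{surjectivity} of the Kodaira--Spencer map, hence $h^1(T_X)\le 165$. For equality you still need the restriction $H^0(\cS_1,T\cS_1)\to H^0(X,T\cS_{1|X})$ to be an isomorphism (so that in the normal-bundle sequence $h^1(T_X)=h^0(N_{X/\cS_1})-h^0(T\cS_{1|X})=210-45$); this is again a Bott check via the resolution~(\ref{resolutionX}), of exactly the same flavour as the paper's computations. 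So both approaches ultimately bottom out in Borel--Weil--Bott on $\cS$; yours trades the computation of $\chi(\Omega^2_X)$ for a degree-ten Chern-number integral, and the computation of $\chi(\Omega_X)$ for the moduli-dimension argument plus one smaller vanishing. The paper's method is more uniform; yours gives a conceptual explanation of the number $165$.
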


\begin{proof}
The missing Hodge number are  $h^4(\Omega_{X})=1+\chi(\Omega_{X})$ and $h^3(\Omega^2_{X})=1-\chi(\Omega^2_{X})$. 
In order to compute them, 
we may suppose that $X$ is a normal degeneration of a double spinor variety, that is, the 
zero locus of a general section of $\wedge^4U^\vee=U(2)$ on $\cS$. 
Then the bundle of forms on $X$ is resolved by the conormal sequence, and the bundle of two-forms  by its
skew-symmetric square, that is 
$$0\ra S^2U_X^\vee(-4)\ra \wedge^2U_X\otimes U_X^\vee(-2)\ra \wedge^2(\wedge^2U_X) \ra\Omega^2_{X}\ra 0.$$
This allows to compute $\chi(\Omega_{X})$  as  $\chi(\wedge^2U_X)-\chi(U_X^\vee(-2))$, and 
$\chi(\Omega^2_{X})$
as $\chi(\wedge^2(\wedge^2U_X))-\chi(\wedge^2U_X\otimes U_X^\vee(-2))+\chi(S^2U_X^\vee(-4))$. Using the 
Koszul complex, this reduces 
once again to computations on the spinor variety. Finally, on the latter we can use the Borel-Weil-Bott theorem
(whose concrete application is illustrated in the proof of Lemma \ref{van} below). 
\hfill $\Box$
\end{proof}

\subsection{Uniqueness} 

In this section we prove that the only translates of $\cS$ that contain $X=\cS\cap g\cS$ are $\cS$ itself, and $g\cS$. 
In particular there is a unique way to represent $X$ as an intersection of two translates of the spinor variety. 
We follow the approach of \cite{bcp}. 

\begin{proposition}
Let $N$ denote the normal bundle to $\cS$ in $\PP\Delta$. Then the restriction of $N$ to $X$ is slope stable.  
\end{proposition}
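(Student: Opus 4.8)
The plan is to follow the strategy of \cite{bcp} for the double Grassmannian case, exploiting the fact that $N = \wedge^4 U^\vee \simeq U(2)$ is a homogeneous, hence slope stable, bundle on $\cS$, and transferring this stability to the general linear section $X$. First I would recall that $U(2)$ is $\Spin_{10}$-homogeneous and irreducible, so it is slope stable (indeed slope polystable, but irreducibility forces stability) on $\cS$ with respect to the polarization $\cO_\cS(1)$; its slope is $\mu(U(2)) = \frac{\deg N}{\operatorname{rk} N}$, computed from $c_1(U(2)) = c_1(U) + 10\,\cO_\cS(1) = 8\,\cO_\cS(1)$, so $\mu(N) = \tfrac{8}{5}\cO_\cS(1)^{10}\!\cdot(\text{stuff})$ — the precise number is immaterial for the argument. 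The key point is that a coherent subsheaf of $N_{|X}$ that destabilizes it would, via a Mehta–Ramanathan / Grothendieck–Lefschetz type argument, have to come from a destabilizing subsheaf of $N$ on all of $\cS$, contradicting homogeneity.

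More concretely, the key steps in order are: (i) observe that $X = \cS \cap g\cS$ is, up to deformation, a general member of a base-point-free linear system cut out by sections of $N^\vee$-twists — or more simply, that $X$ is a smooth complete intersection of $\cS$ with five general hyperplanes is \emph{not} literally true, but $X$ sits inside $\cS$ with $N_{X/\cS} = \cO_X(8)$ and is a linear section in the ambient $\PP\Delta$ of the right codimension so that Grothendieck–Lefschetz applies; (ii) invoke a Lefschetz-type restriction theorem for stability: since $\dim X = 5 \geq 3$ and $X$ is obtained from $\cS$ (dimension $10$) by a sufficiently positive ample complete intersection, the restriction functor on reflexive sheaves is well behaved, and in particular $\operatorname{Pic}(X) = \ZZ\,\cO_X(1)$ (already established in Proposition \ref{cy}) means slopes are measured against the same generator; (iii) use the Mehta–Ramanathan restriction theorem (or the Flenner/Langer version for complete intersections of high degree) to conclude that $N$ being slope stable on $\cS$ implies $N_{|X}$ is slope stable, provided $X$ is cut out by hypersurfaces of sufficiently high degree — and here one must check that the relevant degrees, governed by $\cO_\cS(1)$ and the codimension-five embedding, are large enough, or circumvent this by a direct argument.

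Because the degrees in the Lefschetz/Mehta–Ramanathan restriction theorems need not a priori be ``large enough'' for a fixed linear section, I expect the main obstacle to be exactly this numerical gap: $X$ is a linear section, cut by hyperplanes of degree one in $\cO_\cS(1)$, which is typically far below the Mehta–Ramanathan bound. The way around it, following \cite{bcp}, is not to use the restriction theorem as a black box but to argue directly: suppose $F \subset N_{|X}$ is a saturated destabilizing subsheaf; then analyze its possible Chern classes using the known cohomology of $\cS$ and of $X$, the resolution (\ref{resolutionX}), and the explicit form $N = U(2)$. Since $N$ has rank $5$ and $c_1(N) = 8$ times the ample generator, a destabilizing $F$ of rank $r < 5$ would have $\mu(F) \geq \tfrac{8}{5}$, i.e. $c_1(F) \geq 2$ (as $c_1(F)$ is an integer multiple of $\cO_X(1)$ and $\lceil 8r/5 \rceil \geq 2$ for $r \geq 1$); one then rules this out by computing $H^0(X, N_{|X}(-2)) = H^0(X, U_X) $ and, more generally, showing that $\wedge^r N_{|X}(-c_1(F))$ has no sections forcing $\det F$ to embed, using the twisted Koszul resolution of $\cO_X$ on $\cS$ together with Borel–Weil–Bott (exactly the machinery flagged in the proof of the preceding proposition and in the forthcoming Lemma \ref{van}). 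The upshot is that the only genuinely new work is the vanishing-of-sections bookkeeping on $X$; the structural skeleton is identical to the double Grassmannian case, and I would present it as such, citing \cite{bcp} for the scheme of the argument and filling in the $\cS$-specific cohomology computations.
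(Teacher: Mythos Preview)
Your final approach is correct and is essentially what the paper does: rule out a destabilizing subsheaf $F\subset N_{|X}$ by showing that the would-be inclusion $\det F\hookrightarrow \wedge^r N_{|X}$ forces a nonzero section of some $\wedge^r U_X^\vee(-k)$, and then kill those sections via the resolution (\ref{resolutionX}) together with Borel--Weil--Bott on $\cS$. The paper packages this step under the name \emph{Hoppe's criterion} (valid precisely because $\Pic(X)=\ZZ\cO_X(1)$, as you note), which reduces the whole proof to the four vanishings
\[
H^0(X,U_X^\vee(-1))=H^0(X,\wedge^2U_X^\vee(-1))=H^0(X,\wedge^3U_X^\vee(-2))=H^0(X,\wedge^4U_X^\vee(-2))=0,
\]
so you would gain clarity by citing Hoppe directly rather than re-deriving the subsheaf-to-sections implication by hand.

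The first half of your proposal, by contrast, should simply be cut. The Mehta--Ramanathan/Flenner/Langer restriction theorems are a dead end here, as you yourself realize: $X$ is not a linear section of $\cS$ at all (it is cut out on $\cS$ by the quadratic equations of a translate), and in any case the numerical bounds in those theorems are far from being met. Likewise the remarks about $X$ being ``up to deformation a general member of a base-point-free linear system'' and the aside about $N_{X/\cS}=\cO_X(8)$ are irrelevant to the stability argument and partly inaccurate (the normal bundle $N_{X/\cS}$ has rank five, not one). Drop all of that and go straight to Hoppe's criterion plus the cohomology vanishings; that is the entire proof.
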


\begin{proof} 
Recall that we denoted by $U$ the rank five tautological bundle on $\cS$. By Proposition 
\ref{normal} there is an isomorphism 
$N\simeq \wedge^4U^\vee=U(2)$, so we just need to prove that $U_X^\vee$ is stable. 
Since the Picard group of $X$ is cyclic by Proposition \ref{cy}, we can apply Hoppe's criterion
\cite[Proposition 1]{jardim}, following which it is enough to check that 
$$H^0(X, U_X^\vee(-1))=H^0(X,\wedge^2U_X^\vee(-1))=H^0(X,\wedge^3U_X^\vee(-2))
=H^0(X,\wedge^4U_X^\vee(-2))=0.$$
This easily follows from the resolution (\ref{resolutionX}) and the following statement. 
\hfill $\Box$
\end{proof}

\begin{lemma}\label{van}
Suppose that $1\le e\le 4$, $0\le q\le 5$ and $t>0$. Then $H^q(\cS,\wedge^eU^\vee(-t))=0$, except for the following cohomology groups:
\begin{enumerate}
\item  $H^1(\cS,\wedge^3U^\vee(-2))=\CC$,
\item  $H^0(\cS,\wedge^4U^\vee(-1))=\Delta$.
\end{enumerate}
\end{lemma}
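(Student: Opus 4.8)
The plan is to compute the cohomology groups $H^q(\cS,\wedge^eU^\vee(-t))$ directly via the Borel–Weil–Bott theorem, since all the bundles in sight are homogeneous and irreducible on the rational homogeneous space $\cS=\Spin_{10}/P_5$. First I would fix conventions: the parabolic $P=P_5$ is associated with the fifth fundamental weight $\omega_5$, the line bundle $\cO_\cS(1)$ corresponds to $\omega_5$, and the tautological bundle $U$ (restricted from $G(5,10)$) has $\wedge^5 U^\vee=\det U^\vee=\cO_\cS(2)$, i.e.\ it corresponds to $2\omega_5$. The bundle $U^\vee$ itself is the irreducible $P$-homogeneous bundle whose fiber at the base point is the standard $\GL_5$-representation obtained from the Levi factor; in Bott's language it corresponds to a weight that I would write out explicitly in the basis of fundamental weights of $\Spin_{10}$, so that $\wedge^e U^\vee(-t)$ corresponds to $\lambda_{e,t}$ for each of the finitely many pairs $(e,t)$ with $1\le e\le 4$, $0\le q\le 5$, $t>0$ — but note that the relevant twists are bounded: once $t$ is large enough the twisted bundle is "very negative" and all cohomology vanishes by Serre duality and Kodaira–Nakano, so only finitely many $(e,t)$ need to be examined. (In fact the Lemma's hypotheses, and the application in the Proposition above, only require $t=1$ and $t=2$.)

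The core of the argument is then mechanical: for each weight $\lambda=\lambda_{e,t}+\rho$ (with $\rho$ the half-sum of positive roots), either $\lambda$ is singular — it lies on a wall of the Weyl chambers, i.e.\ $\langle\lambda,\alpha^\vee\rangle=0$ for some positive root $\alpha$ — in which case all cohomology of the bundle vanishes; or $\lambda$ is regular, and there is a unique Weyl group element $w$ of some length $\ell(w)$ making $w\cdot\lambda=w(\lambda)-\rho$ dominant, and then $H^{\ell(w)}(\cS,\wedge^eU^\vee(-t))$ is the irreducible $\Spin_{10}$-module with that highest weight, all other cohomology vanishing. I would organize the computation as a short table over the (few) admissible pairs $(e,t)$, and read off the two exceptions: the twist $\wedge^3U^\vee(-2)$ produces a regular weight whose dominant representative is $0$, reached by a length-one Weyl element, giving $H^1=\CC$; and $\wedge^4U^\vee(-1)=U^\vee(-1)\otim*$ — more precisely, $\wedge^4U^\vee\simeq U(2)$ by Proposition \ref{normal}, so $\wedge^4U^\vee(-1)\simeq U(1)$, which is $P$-dominant and has $H^0$ equal to the half-spin module $\Delta$ (consistent with $H^0(\cS,\wedge^4U^\vee)=\wedge^4V_{10}^\vee$ and the twist-by-$(-1)$ stripping off one copy of the $\omega_5$-weight), all higher cohomology vanishing. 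Every other pair gives a singular weight, hence total vanishing in the range $0\le q\le 5$ (and beyond).

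The main obstacle — really the only place where care is needed — is bookkeeping: translating each homogeneous bundle $\wedge^e U^\vee(-t)$ into the correct $\Spin_{10}$-weight (getting the coefficient of $\omega_5$ right is the subtle point, because $\det U^\vee=\cO(2)$ not $\cO(1)$, so $\wedge^e U^\vee$ is \emph{not} simply $\cO(e)$-ish), adding $\rho$, and then checking regularity/singularity against all $20$ positive roots of $D_5$. I would double-check the two surviving groups against independent constraints: the identification $H^0(\cS,\wedge^4U^\vee(-1))=\Delta$ is forced by self-duality considerations and dimension count ($\dim\Delta=16$), and the Euler-characteristic computations already invoked for the Hodge numbers of $X$ provide a consistency check on the alternating sum of the $H^q(\cS,\wedge^eU^\vee(-t))$. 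Once the table is in place, the vanishing statement and the two exceptions follow immediately, and plugging them into resolution \eqref{resolutionX} yields the four vanishing statements $H^0(X,\wedge^eU_X^\vee(-\ast))=0$ needed for Hoppe's criterion in the preceding Proposition.
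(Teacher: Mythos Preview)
Your proposal is correct and follows essentially the same route as the paper: both reduce the lemma to a direct case-by-case application of the Borel--Weil--Bott theorem on $\cS=\Spin_{10}/P_5$, and the paper's proof is in fact terser than yours, working out only the case $\wedge^3U^\vee(-2)$ in full detail (with the explicit $D_5$ root and weight data) and declaring the rest ``straightforward.'' One minor remark: your reduction to finitely many $t$ is really Serre vanishing (via Serre duality $H^q(\cS,\wedge^eU^\vee(-t))\cong H^{10-q}(\cS,\wedge^eU(t-8))^\vee$ and the eventual acyclicity of $\wedge^eU(t-8)$ for $t\gg0$), not Kodaira--Nakano per se, but this does not affect the argument.
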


\begin{proof} This is a straightforward application of the Borel-Weil-Bott theorem 
(see e.g. \cite[Theorem 2.1]{bor} and references therein). 
Let us prove the first identity, to explain how this theorem applies in our setting. 
The root system $D_5$ can be described in terms of a lattice 
with orthonormal basis $\epsilon_1, \ldots , \epsilon_5$. The simple roots of $D_5$ can 
be chosen to be 
$$\alpha_1=\epsilon_1-\epsilon_2, \; \alpha_2=\epsilon_2-\epsilon_3, \; \alpha_3=\epsilon_3-\epsilon_4, \; \alpha_4=\epsilon_4-\epsilon_5, \; \alpha_5=\epsilon_4+\epsilon_5.$$
The fundamental weights are then $\omega_1=\epsilon_1$, $\omega_2=\epsilon_1+\epsilon_2$,  $\omega_3=\epsilon_1+\epsilon_2+\epsilon_3$ and  $$\omega_4=\frac{1}{2}(\epsilon_1+\epsilon_2+\epsilon_3+\epsilon_4-\epsilon_5), \qquad
\omega_5=\frac{1}{2}(\epsilon_1+\epsilon_2+\epsilon_3+\epsilon_4+\epsilon_5).$$
The sum of the fundamental weights is 
$$\rho=\omega_1+\omega_2+\omega_3+\omega_4+\omega_5=
4\epsilon_1+3\epsilon_2+2\epsilon_3+\epsilon_4.$$
The weights of $U^\vee$ are the $\epsilon_i$'s, so the weights of $\wedge^3U^\vee$ are the sums 
of three distinct $\epsilon_i$'s, and the highest one
is $\epsilon_1+\epsilon_2+\epsilon_3$. Thus the highest weight of $\wedge^3U^\vee(-2)$ is 
$\omega_3-2\omega_5$. Bott's theorems states that in order to find the cohomology groups of 
our bundle, we first need to add $\rho$ to this weight, which gives $\tau=\omega_1+\omega_2+2\omega_3+\omega_4-\omega_5$. No root of $D_5$ is orthogonal to $\tau$,
so there will be one non zero cohomology group. To find it, we choose a simple root with negative
scalar product with $\tau$; there is only one, $\alpha_5$; so we apply the associated simple 
reflection, which yields $s_5(\tau)=\tau+\alpha_5=\omega_1+\omega_2+\omega_3+\omega_4+\omega_5$. 
Since no coefficient is negative we do not need to repeat this operation; we just subtract $\rho$. 
This yields the weight zero, so we get a cohomology group isomorphic to the trivial representation
$\CC$, in degree one because we just needed to apply one simple reflection. 
(Note by  the way that $\wedge^3U^\vee(-2)\simeq\wedge^2U\simeq\Omega_\cS$, which explains why we obtain
$H^1(\cS,\wedge^3U^\vee(-2))=H^1(\cS,\Omega_\cS)=\CC$.)
\hfill $\Box$
\end{proof}

\medskip
The next step is to prove the following statement:

\begin{proposition} Suppose $X\subset\cS_1$, where $\cS_1$ is a translate of $\cS$. 
Let $N_1$ be the normal bundle to $\cS_1$ in $\PP\Delta$. Then from its restriction   
to $X$ one can reconstruct the embeddings $X\subset\cS_1\subset\PP(\Delta)$.
\end{proposition}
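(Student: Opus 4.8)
The plan is to recover the embedding $X \subset \cS_1 \subset \PP(\Delta)$ from the restricted normal bundle $N_{1|X}$ by a sequence of natural cohomological constructions, following the strategy of \cite{bcp} in the Grassmannian case. First I would use the identification $N_1 \simeq \wedge^4 U_{\cS_1}^\vee \simeq U_{\cS_1}(2)$ of Proposition \ref{normal}, so that $N_{1|X} \simeq U_X(2)$, equivalently $U_X^\vee \simeq N_{1|X}^\vee(2)$. Thus from $N_{1|X}$ alone (knowing only the abstract isomorphism class of the bundle, together with $\cO_X(1)$, which is intrinsic to $X$ since $\Pic(X) = \ZZ\,\cO_X(1)$ by Proposition \ref{cy}) one reconstructs the restricted tautological bundle $U_X$ and its dual. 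The point is then that $U_X$, or rather $\wedge^4 U_X^\vee(-1) = N_{1|X}(-1)$, carries the Plücker–spinor data: by Lemma \ref{van}(2) together with the resolution (\ref{resolutionX}), one has $H^0(X, \wedge^4 U_X^\vee(-1)) = H^0(\cS_1, \wedge^4 U_{\cS_1}^\vee(-1)) = \Delta$, the map $H^0(\cS_1, \cdot) \to H^0(X, \cdot)$ being an isomorphism because the higher cohomology groups killing the Koszul differentials vanish (this is exactly the kind of vanishing assembled from (\ref{resolutionX}) and Lemma \ref{van}).

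Next, having recovered $\Delta = H^0(X, N_{1|X}(-1))$ intrinsically, I would reconstruct the morphism $X \to \PP(\Delta)$ as the one given by the complete linear system $|\cO_X(1)|$: indeed $H^0(X,\cO_X(1)) \simeq H^0(\PP\Delta, \cO(1)) = \Delta^\vee$ by the argument already used in the proof of Proposition \ref{complete}, and one must check that the two copies of $\Delta$ (the one built from the normal bundle and the one dual to $H^0(\cO_X(1))$) are matched compatibly — this follows because the evaluation map $H^0(X, N_{1|X}(-1)) \otimes \cO_X \to N_{1|X}(-1)$ together with the Plücker-type equations reproduces, fibrewise over $X$, the incidence between a point of $X$ (viewed as a maximal isotropic subspace via $U_X$) and its spinor coordinates. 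Concretely, using the local model of Proposition \ref{parametrization} and the quadratic equations of Proposition \ref{quad}, the four-form $\wedge^4 U_X^\vee$ paired against $\cO_X(-1)$ gives precisely the top Pfaffian component $\omega \mapsto \omega \wedge \omega$, and the lower components are determined by $\cO_X(1)$ and $\wedge^2 U_X^\vee$, so the full spinor embedding of $\cS_1$, restricted to $X$, is encoded.

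Finally, to pass from $X \hookrightarrow \PP(\Delta)$ to the ambient $\cS_1 \subset \PP(\Delta)$, I would invoke Proposition \ref{quad}: the span of the quadratic equations of $\cS_1$ is the ten-dimensional space $V_{10}$, cut out inside $H^0(\PP\Delta, \cO(2)) = S^2\Delta^\vee$ by the condition of vanishing on $X$ — and one checks, again via (\ref{resolutionX}) and Borel–Weil–Bott, that the restriction map on quadrics $H^0(\PP\Delta, \cI_{\cS_1}(2)) \to H^0(\PP\Delta, \cI_X(2))$ is injective with image the quadrics through $X$ that come from $\cS_1$ (as opposed to those from $\cS_2$); since $X$ determines the full ideal of quadrics through it, and the $\cS_1$-part is singled out — for instance as the annihilator of the distinguished subspace coming from the $U_X$-data, or simply as the unique ten-dimensional $\PSO_{10}$-stable piece once the group action is reconstructed — one recovers the quadrics defining $\cS_1$, hence $\cS_1$ itself as their common zero locus.

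The main obstacle I anticipate is precisely this last point: disentangling, among the quadrics through $X$, those belonging to $\cS_1$ from those belonging to $\cS_2$ (and from any spurious extra quadrics), i.e.\ showing the relevant restriction map on $H^0$ of ideal sheaves of quadrics has exactly the expected rank. This is a cohomological computation built from the resolution (\ref{resolutionX}) twisted by $\cO(2)$, requiring the vanishing of a handful of intermediate cohomology groups on $\cS_1$, and it is where one genuinely uses that $X$ has codimension five and that the two spinor varieties meet transversely; everything before it is a more or less formal unwinding of the normal bundle identification plus the linear-system reconstruction of the embedding.
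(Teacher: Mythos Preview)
Your reduction from $N_{1|X}$ to $U_X$ is fine, and so is the observation that $H^0(X,\cO_X(1))\simeq\Delta^\vee$ and $H^0(X,U_X(1))\simeq\Delta$ (by Lemma \ref{van}(2) and the resolution). But the last step, where you propose to look at the full $20$-dimensional space of quadrics through $X$ and then ``disentangle'' the ten belonging to $\cS_1$ from the ten belonging to $\cS_2$, is a genuine gap. You acknowledge it yourself, and neither of your two suggested fixes works as stated: there is no $\PSO_{10}$-action on $X$ available a priori (reconstructing it is essentially the content of the proposition), and your ``annihilator of the distinguished subspace coming from the $U_X$-data'' is not made precise. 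Without a concrete mechanism, you have not shown how $U_X$ singles out one ten-dimensional family of quadrics over the other.

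The paper avoids this separation problem altogether by building the ten quadrics of $\cS_1$ \emph{directly} out of $U_X$, never passing through the full ideal of $X$. The key device is the trace map $U_X^\vee\otimes U_X(1)\to\cO_X(1)$, which on global sections gives
\[
\eta:\; H^0(X,U_X^\vee)\otimes H^0(X,U_X(1))\;\lra\; H^0(X,\cO_X(1)),
\qquad\text{i.e.}\quad V_{10}\otimes\Delta\lra\Delta^\vee.
\]
Once one has identified the \emph{specific} duality $\Delta\simeq (\Delta^\vee)^\vee$ that makes this land in $Sym^2\Delta^\vee$, the image of the induced map $V_{10}\to Sym^2\Delta^\vee$ is exactly the space of quadrics cutting out $\cS_1$, with no interference from $\cS_2$. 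The substantive work, which your phrase ``matched compatibly'' glosses over, is pinning down this duality intrinsically from $X$: the paper does it by showing that the map $\xi:\Delta^\vee\otimes\Delta\to\wedge^2\Delta^\vee\otimes V_{10}$ built from $\eta$ has a one-dimensional kernel (a representation-theoretic check), and that kernel \emph{is} the sought pairing. This is the step your proposal is missing; the rest of your outline (recovering $U_X$, checking restriction isomorphisms via (\ref{resolutionX})) is in line with the paper.
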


\begin{proof} Suppose $\cS_1=\cS$ to simplify the notations. 
Since $\cS$ is cut out by quadrics, our 
strategy will be to reconstruct its quadratic equations from $N_X$, or equivalently, from $U_X$. 
Then we simply recover $\cS$ as the zero locus of its quadratic equations.

 \smallskip\noindent {\bf Step 1}.
The key observation is that there is a natural isomorphism
\begin{equation}\label{iso}
H^0(\cS,U(1))\simeq H^0(\cS,\cO_\cS(1))^\vee.
\end{equation}
Indeed, $U^\vee$ is the irreducible bundle associated to the representation of highest weight $\omega_1=\epsilon_1$. 
The highest weight of its dual $U$ is $-\epsilon_5=\omega_4-\omega_5$. Therefore the highest weight of $U(1)$ is 
$\omega_4$, and the assertion follows from the Borel-Weil theorem.

 \smallskip\noindent {\bf Step 2}.
There are  natural morphisms 
$$H^0(\cS,U^\vee)\otimes H^0(\cS,U(1))\lra H^0(\cS,U^\vee\otimes U(1))\lra H^0(\cS,\cO_\cS(1)),$$
the right hand side being induced by the trace map $U^\vee\otimes U\ra \cO_\cS$. 
This determines the quadratic equations of $\cS$, as the image of the induced map
$$\begin{array}{ll}
V_{10}=V_{10}^\vee=H^0(\cS,U^\vee)\lra H^0(\cS,U(1))^\vee\otimes H^0(\cS,\cO_\cS(1))\simeq H^0(\cS,\cO_\cS(1))^{\otimes 2}\lra Sym^2H^0(\cS,\cO_\cS(1)).
 \end{array}$$
 (The composition is non zero because $\wedge^2H^0(\cS,\cO_\cS(1))$ does not contain any
 direct factor isomorphic to $V_{10}$.)
 
 \medskip With the help of these observations, we now need to prove that we can recover the 
 quadratic equations of $\cS$ just starting from $U_X$. We will show we can follow exactly 
 the same argument as above, using only spaces of sections of bundles defined on $X$ only.

 \smallskip\noindent {\bf Step 3}. We deduce from (\ref{resolutionX}) and the Borel-Weil-Bott theorem that the restriction morphism
 $$res_F : H^0(\cS,F)\lra H^0(X,F_X)$$
 is an isomorphism for either $F=\cO_\cS(1), U^\vee, U(1), U(2), Sym^2U^\vee$. 
 
 \smallskip\noindent {\bf Step 4}. 
 We recover the quadratic form (up to scalar) on $V_{10}^\vee=H^0(X,U_X^\vee)$ as a generator of the  one dimensional kernel of the map $Sym^2H^0(X,U_X^\vee)\lra H^0(X,Sym^2U_X^\vee)$;  hence the isomorphism $V_{10}^\vee\simeq V_{10}$. 
 
 \smallskip\noindent {\bf Step 5}. 
 We have $\Delta_+^\vee=H^0(X,\cO_X(1))$ and we want to identify its dual with $\Delta_-^\vee=H^0(X,U_X(1))$. 
 First note that we have a natural
 map  $\eta : V_{10}^\vee\otimes \Delta_-^\vee\lra\Delta^\vee$ obtained by multiplying
 sections and contracting:
 $$\eta: H^0(X,U_X^\vee)\otimes H^0(X,U_X(1))\lra H^0(X,U_X^\vee\otimes U_X(1))\lra 
 H^0(X,\cO_X(1)).$$
 
 \smallskip\noindent {\bf Step 6}. 
 From $\eta$ we get a map $\chi :  \Delta_-^\vee\lra\Delta^\vee\otimes V_{10}$, hence 
 also a map
 $$\xi : \Delta^\vee\otimes \Delta_-^\vee\lra\wedge^2\Delta^\vee\otimes V_{10}.$$ 
We claim that we can recover the duality between $\Delta$ and $\Delta_-$ as being given by
the one-dimensional kernel $K\subset \Delta^\vee\otimes \Delta_-^\vee$ of the $\xi$. 
Because of the results of Step 3, it is enough to check this claim when we consider the same maps as defined on 
$\cS$ rather than on $X$. We use the following decompositions into irreducible $\Spin_{10}$-modules
\cite{LiE}:
$$\Delta^\vee\otimes \Delta_-^\vee = V_{\omega_4+\omega_5}\oplus V_{\omega_2}\oplus\CC,$$
$$\wedge^2\Delta^\vee\otimes V_{10}=V_{\omega_4+\omega_5}\oplus V_{\omega_2}\oplus V_{\omega_1+\omega_3}.$$
There is a unique irreducible factor $K=\CC$ that appears in the first decomposition and 
not in the second one. Elementary computations, left to the reader, allow to check that $K$ is exactly the 
kernel of $\xi$. Note that since $\Delta$ and $\Delta_-$ are irreducible, $K$ must define a perfect 
duality between these modules. 

 \smallskip\noindent {\bf Step 7}.
 Once we have the duality defined by $K\subset \Delta^\vee\otimes \Delta_-^\vee$, 
 we can argue exactly as in Step 2 to realize $V_{10}$, just starting from $X$, as a system 
 of quadrics on $\PP (\Delta)$. We finally recover $\cS$ as the base locus of this system.~\hfill $\Box$ 
\end{proof}

\paragraph{Remark.} The key isomorphism (\ref{iso}) can be explained as follows. 
Recall that $\cS\subset\PP\Delta$ and $\cS^\vee\subset\PP\Delta^\vee$ parametrize the 
two families of maximal isotropic spaces of $V_{10}$. Two such spaces belong to different
families if and only if they intersect in even dimension. Moreover, any isotropic four-plane
is contained in exactly two maximal isotropic subspaces, one from each family. We can therefore
identify the orthogonal Grassmannian $OG(4,V_{10})$ with 
the incidence variety $\cI\subset \cS\times \cS^\vee $ of pairs $(U,U')$ such that $L=U\cap U'$ 
has dimension four, and denote by $p, p_\vee$ the two projections. By the preceding observations,
the projection $p$ identifies $\cI$ with the projective bundle $\PP U^\vee$. Moreover, with the 
previous notations, we have $\det(U)=\det(L)\otimes U/L$, $\det(U')=\det(L)\otimes U'/L$, and 
the quadratic form induces a natural duality between $U/L$ and $U'/L$. From this one easily 
deduces that $p_\vee^*\cO_{\cS^\vee}(1)= \cO_{\PP U^\vee}(1)\otimes p^*\cO_\cS(1)$. And then
$$H^0(\cS,\cO_\cS(1))^\vee = H^0(\cS^\vee,\cO_{\cS^\vee}(1)) = H^0(\cI,p_\vee^*\cO_{\cS^\vee}(1))=H^0(\cI,  \cO_{\PP U^\vee}(1)\otimes p^*\cO_\cS(1))=H^0(\cS,U(1)).$$

\medskip
By the same argument as in \cite[Proposition 2.3]{bcp}, we deduce from the previous Proposition that:

\begin{proposition} 
If $X=\cS_1\cap\cS_2$ is a transverse intersection, then the only translates of $\cS$ 
that contain $X$ are   $\cS_1$ and  $\cS_2$. 
\end{proposition}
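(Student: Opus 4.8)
Following \cite[Proposition 2.3]{bcp}, the plan is to deduce this from the two previous statements --- the slope-stability of the restriction of the normal bundle to $X$, and the reconstruction of $X\subset\cS_1\subset\PP\Delta$ from that restriction --- together with elementary facts about semistable sheaves. Fix any translate $\cS_3$ of $\cS$ containing $X$; we must show $\cS_3\in\{\cS_1,\cS_2\}$ (note $\cS_1\ne\cS_2$, since otherwise $X$ would be ten-dimensional). For $i=1,2,3$ write $N_i$ for the normal bundle of $\cS_i$ in $\PP\Delta$, so $N_i\simeq U_{\cS_i}(2)$ by Proposition \ref{normal} and $c_1(N_i|_X)=8\,\cO_X(1)$.

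First I would exploit transversality: since $X=\cS_1\cap\cS_2$ is a transverse intersection, comparing tangent spaces along $X$ gives a canonical splitting $N_{X/\PP\Delta}\simeq N_1|_X\oplus N_2|_X$. By the stability statement proved above, applied both to $\cS_1$ and to $\cS_2$ --- which is legitimate because the resolution (\ref{resolutionX}) of $\cO_X$ holds over $\cO_{\cS_1}$ and, symmetrically, over $\cO_{\cS_2}$ --- the summands $A:=N_1|_X$ and $B:=N_2|_X$ are slope-stable; they have the same rank $5$ and the same slope $\mu$ (their first Chern class being $8\,\cO_X(1)$), and they are non-isomorphic, for an isomorphism $A\simeq B$ would, through the reconstruction procedure, force $\cS_1=\cS_2$.

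Next, since $X\subset\cS_3$ with both smooth, $N_3|_X$ is a quotient of $N_{X/\PP\Delta}$; write a surjection $q\colon A\oplus B\to N_3|_X$ and set $K:=\ker q=N_{X/\cS_3}$, a rank-$5$ subbundle. A first Chern class computation (again via Proposition \ref{normal}, using $\omega_X\simeq\cO_X$) shows $\mu(N_3|_X)=\mu(K)=\mu$. Now $A\oplus B$ is polystable of slope $\mu$, hence a semisimple object of the abelian category of semistable sheaves of slope $\mu$ on $X$; the subbundle $K$, having slope $\mu$, is a subobject of it there, hence a direct summand. Comparing ranks and using $A\not\simeq B$, necessarily $K=A$ or $K=B$ (as subbundles of $N_{X/\PP\Delta}$), so $N_3|_X$ is isomorphic to the complementary summand --- to $B$ or to $A$ --- equivalently $U_{\cS_3}|_X\simeq U_{\cS_2}|_X$ or $U_{\cS_3}|_X\simeq U_{\cS_1}|_X$. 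Feeding this into the reconstruction result, which recovers $X\subset\cS_i\subset\PP\Delta$ from the isomorphism class of $N_i|_X$ alone, gives $\cS_3=\cS_2$ or $\cS_3=\cS_1$.

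I expect the middle step --- identifying the quotient $N_3|_X$ --- to be the only real point. The tidy formulation is the one above via semisimplicity of $A\oplus B$; more hands-on, one checks that a rank-$5$ slope-$\mu$ subbundle $K$ of $A\oplus B$ must have $\mathrm{rk}(K\cap A),\mathrm{rk}(K\cap B)\in\{0,5\}$ --- forced by $\deg K=5\mu$ together with the \emph{strict} slope inequality for proper subsheaves of smaller rank in the stable bundles $A$ and $B$ --- and that the case $K\cap A=K\cap B=0$ cannot occur, since $K$ would then sit as a full-rank subsheaf with torsion-free cokernel of codimension $\ge 2$ inside each of $A$ and $B$, hence be equal to both, contradicting $A\not\simeq B$. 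In either approach stability is used essentially; one should also note that the reconstruction procedure depends only on the abstract bundle $N_i|_X$, through the associated spaces of sections and their natural $\Spin_{10}$-equivariant maps, whose relevant properties are governed by the cohomological vanishings of Step 3 of the previous proof and so are preserved under an abstract isomorphism $N_3|_X\simeq N_1|_X$ --- so that this isomorphism indeed forces $\cS_3=\cS_1$.
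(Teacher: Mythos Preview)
Your proposal is correct and follows exactly the approach the paper intends: the paper gives no argument of its own but simply invokes \cite[Proposition~2.3]{bcp}, and what you have written is a faithful unpacking of that argument --- split $N_{X/\PP\Delta}$ as $N_1|_X\oplus N_2|_X$ by transversality, use stability of the summands and the polystability of their sum to force any rank-$5$, slope-$\mu$ quotient to be one of the two factors, and then apply the reconstruction proposition. One small wording slip in your hands-on variant: the cokernels of $\pi_A,\pi_B$ are \emph{torsion} (not torsion-free) of degree zero, hence supported in codimension $\ge 2$; but the cleaner way to exclude the case $K\cap A=K\cap B=0$ is the one you already have implicitly --- a nonzero map from the semistable sheaf $K$ of slope $\mu$ to the stable sheaf $A$ (resp.\ $B$) of the same slope must be surjective, hence an isomorphism, giving $A\simeq K\simeq B$, a contradiction.
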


\section{Double mirrors}

Recall that the spinor variety $\cS\subset\PP\Delta$ is projectively dual to the other spinor variety 
$\cS^\vee\subset\PP\Delta^\vee$. We may therefore associate to $X=\cS_1\cap \cS_2\subset\PP\Delta$, the other double spinor variety $Y=\cS_1^\vee\cap \cS_2^\vee\subset\PP\Delta^\vee$. 
When $X$ is smooth, its presentation as the intersection of two translated spinor varieties is unique, and therefore 
 $Y$ is uniquely defined.

\subsection{Derived equivalence}

\begin{proposition}
The double spinor varieties $X$ and $Y$ are simultaneously smooth of expected dimension.
\end{proposition}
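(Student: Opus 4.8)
The plan is to reduce the statement to a single geometric fact about projective duality: that for a projectively self-dual variety, transversality of two translates is equivalent to transversality of the corresponding translates of the dual. First I would reformulate smoothness of expected dimension in terms of tangency: $X = \cS_1 \cap \cS_2$ fails to be smooth of dimension five exactly when there is a point $x \in \cS_1 \cap \cS_2$ at which the projective tangent spaces $\PP(T_x\cS_1)$ and $\PP(T_x\cS_2)$ fail to meet transversely inside $\PP\Delta$; since each $\cS_i$ has codimension five, this means the span $\langle T_x\cS_1, T_x\cS_2 \rangle$ has dimension at most $14$ rather than $15$, i.e. there is a hyperplane $H$ containing both tangent spaces. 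Such an $H$ is a point of $\PP\Delta^\vee$ which is simultaneously tangent to $\cS_1$ at $x$ and to $\cS_2$ at $x$; equivalently, $H \in \cS_1^\vee \cap \cS_2^\vee$ (using that the projective dual of $\cS_i$ is the translate $\cS_i^\vee$). So non-transversality of $\cS_1 \cap \cS_2$ produces a point of $\cS_1^\vee \cap \cS_2^\vee$ together with a common tangency, and one has to check this is precisely a non-transversality of $\cS_1^\vee$ and $\cS_2^\vee$ at that point.

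The key input making this a genuine equivalence is the biduality theorem: $(\cS_i^\vee)^\vee = \cS_i$, which holds because $\cS$, being a smooth projective variety whose dual is again smooth of the expected dimension (indeed $\cS^\vee \cong \cS$), is reflexive. Concretely, the conormal variety $\mathrm{Con}(\cS) \subset \PP\Delta \times \PP\Delta^\vee$ — the closure of the set of pairs $(x, H)$ with $H$ tangent to $\cS$ at a smooth point $x$ — is symmetric under swapping the two factors, and for $\cS$ it is a smooth $\PGL$-homogeneous-type correspondence (a projective bundle $\PP U^\vee$ over $\cS$, as recalled in the Remark before this Proposition, identifying it with $OG(4,V_{10})$). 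I would then argue: $X$ is singular or of excess dimension $\iff$ there is $(x,H)$ with $x \in \cS_1 \cap \cS_2$, $(x,H) \in \mathrm{Con}(\cS_1)$ and $(x,H) \in \mathrm{Con}(\cS_2)$, i.e. $\mathrm{Con}(\cS_1)$ and $\mathrm{Con}(\cS_2)$ meet over $X$ in a way witnessing the failure. The symmetry of the conormal variety turns this into the same statement for $Y$: the pair $(x,H)$, read from the $\PP\Delta^\vee$ side, is a common tangency of $\cS_1^\vee$ and $\cS_2^\vee$ at $H$, hence $Y = \cS_1^\vee \cap \cS_2^\vee$ is singular or of excess dimension at $H$. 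Running the implication in both directions (using biduality) gives the equivalence.

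To make the tangency bookkeeping clean I would phrase it via the product correspondence $Z_i := \mathrm{Con}(\cS_i) \subset \PP\Delta \times \PP\Delta^\vee$ and consider, on the incidence subvariety of $\PP\Delta \times \PP\Delta^\vee$, the condition that $Z_1$ and $Z_2$ fail to meet transversely; this is manifestly symmetric in the two projective spaces. Alternatively, one can avoid conormal varieties entirely and do a direct tangent-space computation: $\cS_1$ and $\cS_2$ fail to be transverse at $x$ iff $T_x\cS_1^\perp \cap T_x\cS_2^\perp \neq 0$ in $\Delta^\vee$, where $\perp$ denotes annihilator; and because each $\cS_i$ is reflexive, a nonzero element of $T_x\cS_i^\perp$ is a point of $\cS_i^\vee$ at which the tangent space pulls back to a hyperplane containing $x$ — running this symmetrically gives the claim. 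The main obstacle I anticipate is the reflexivity/biduality step: one must be sure that $\cS^\vee$ is itself smooth of the expected dimension (which is true, since $\cS^\vee \cong \cS$ is smooth of dimension ten and the dual defect is zero — $\cS^\vee$ is a hypersurface-like object of the right codimension, here codimension five, matching $\cS$), so that biduality applies and the argument closes up into an "if and only if" rather than a one-sided implication. Everything else is linear algebra of tangent spaces plus the already-established identification $\cS^\vee \subset \PP\Delta^\vee$ of the projective dual.
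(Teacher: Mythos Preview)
Your proposal is correct and follows essentially the same line as the paper: non-transversality of $\cS_1 \cap \cS_2$ at $x$ means there is a hyperplane $H$ tangent to both $\cS_i$ at $x$; then $H$ lies in $Y = \cS_1^\vee \cap \cS_2^\vee$, and by the symmetry of the conormal correspondence (biduality) this is exactly a non-transversality of $Y$ at $H$ --- the paper makes the tangency step concrete via Lemma~\ref{tangency} (the condition $\dim(E\cap F)=4$), where you invoke abstract reflexivity. One small correction: the dual defect of $\cS$ is $4$, not zero ($\cS^\vee$ has codimension $5$ in $\PP^{15}$, not $1$); this does not affect your argument, since biduality holds for any irreducible projective variety in characteristic zero regardless of defect.
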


\begin{proof} 
Suppose $\cS_1=g_1\cS$ and $\cS_2=g_2\cS$ and let $x\in X$. Then $x=g_1E_1=g_2E_2$ for some $E_1,E_2$ in $\cS$. 
The intersection of $\cS_1$ and $\cS_2$ fails to be transverse at $x$ if and only if there is a point $y\in\PP\Delta^\vee$ such that the corresponding hyperplane $H_y$ in $\PP\Delta$ is tangent to both $\cS_1$ and $\cS_2$ at $x$. By Lemma \ref{tangency}, this means that 
 $y=g_1^tF_1=g_2^tF_2$ for some $F_1,F_2$ in $\cS^\vee$, such that 
 $\dim (E_1\cap F_1) = \dim (E_2\cap F_2) = 4$. In particular $y$ belongs to $g_1^t\cS^\vee\cap g_2^t\cS^\vee = \cS_1^\vee\cap\cS_2^\vee =Y$ and by 
 symmetry, the intersection of $\cS_1^\vee$ and $\cS_2^\vee$ fails to be transverse at $y$. This implies the claim. 
\hfill $\Box$ 
\end{proof}

\begin{proposition}\label{dereq}
When they are smooth, the double spinor varieties $X$ and $Y$ are derived equivalent.
\end{proposition}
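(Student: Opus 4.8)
The plan is to follow the strategy developed in \cite{bcp} and \cite{or} for the double Grassmannians, using the self-duality of the spinor variety at the level of homological projective duality. Recall from Section 2.3 that $\cS\subset\PP\Delta$ and $\cS^\vee\subset\PP\Delta^\vee$ are homologically projectively dual in the sense of Kuznetsov \cite[Section 6.2]{kuz}, and that the derived category of $\cS$ admits a rectangular Lefschetz decomposition of length eight, generated by the exceptional pair $\langle\cO_\cS,U^\vee\rangle$. The key is then to invoke Kuznetsov's HPD theorem: for a sufficiently generic linear subspace $L\subset\Delta^\vee$ of dimension $\ell$, the linear section $\cS\cap\PP(L^\perp)$ and the dual linear section $\cS^\vee\cap\PP(L)$ share a common "interesting" piece in their derived categories, the complementary pieces being generated by exceptional objects coming from the Lefschetz decompositions.

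First I would reinterpret $X=\cS_1\cap\cS_2$ as a linear section of a larger HPD-friendly variety. Up to the $\PGL(\Delta)$-action we may fix $\cS_1=\cS$, so $X=\cS\cap g\cS$. The standard trick is to consider the product $\cS\times\cS\subset\PP\Delta\times\PP\Delta$, or rather, following \cite{bcp}, to use the graph construction: $X$ is a linear section of the Segre-type variety associated to the two copies, or equivalently one works with $\cS$ inside $\PP(\Delta\oplus\Delta)$ and intersects with a linear subspace encoding $g$. Concretely, $X$ is cut out of $\cS_1$ by the pullback of the equations of $\cS_2$, which (since $\cS$ is cut out by the quadrics parametrized by $V_{10}$, Proposition \ref{quad}) one converts into a linear section by the usual Veronese/HPD formalism; or more directly, one applies HPD for $\cS$ to the linear subspace of $\PP\Delta^\vee$ spanned by the tangent hyperplanes, observing via Lemma \ref{tangency} that $Y=\cS_1^\vee\cap\cS_2^\vee$ is exactly the dual linear section.

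Then the heart of the argument: apply Kuznetsov's HPD theorem to get semiorthogonal decompositions
\[
D^b(X)=\langle \cA_X, \text{(eight-ish exceptional blocks)}\rangle,\qquad
D^b(Y)=\langle \cA_Y, \text{(exceptional blocks)}\rangle,
\]
where $\cA_X\simeq\cA_Y$ is the common HPD core. The point is then a dimension count: since $\cS$ has index eight and $\dim\cS=10$, and the "expected" linear section controlling $X$ has the right codimension, the exceptional blocks on both sides must have the \emph{same} length, so that after cancelling them one is left with $D^b(X)\simeq\cA_X\simeq\cA_Y\simeq D^b(Y)$. This numerical coincidence — that $X$ and $Y$ land precisely at the "balanced" point where both residual categories are empty, hence the two HPD cores exhaust the whole derived categories — is exactly what the self-duality of $\cS$ and the equality of the index with the relevant codimension forces, and it parallels the $G(2,5)$ case where index $5$ on a $6$-fold produces the analogous balance.

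The main obstacle, and the step requiring the most care, is setting up the HPD formalism correctly for a \emph{double} section rather than a single linear section, i.e. making precise how the intersection of two translates of $\cS$ becomes a genuine HPD linear section. In \cite{bcp} this is handled by working over the base $\PP\Delta^\vee$ with the universal family, or by the "$\cS\times\cS$" device combined with Kuznetsov's HPD for products; one must check that the genericity hypotheses (transversality, which holds by our smoothness assumption, plus the technical "faithfulness" conditions in Kuznetsov's theorem) are satisfied, and that the Lefschetz decomposition of $\cS$ is of the correct length for the bookkeeping to close up. Once that framework is in place the conclusion is essentially formal; I would therefore spend the bulk of the proof on the HPD setup and the length count, and merely cite \cite{kuz} and \cite{bcp} for the underlying machinery. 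Alternatively, if a direct HPD argument proves cumbersome, one can instead exhibit an explicit Fourier–Mukai kernel supported on the incidence correspondence $\{(x,y)\in X\times Y : y\in H_x\}$ (a divisor of the expected relative dimension inside $X\times Y$, by the transversality analysis of the previous proposition) and check it induces an equivalence using the resolution (\ref{resolutionX}) together with Lemma \ref{van}; but the HPD route is cleaner and is the one I would present.
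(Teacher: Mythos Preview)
Your instinct to use homological projective duality is correct, and the paper's proof rests on exactly that framework. However, the paper's argument is a two-line citation: it invokes the categorical joins machinery of Kuznetsov--Perry \cite{kp} (or equivalently the categorical Pl\"ucker formula of Jiang--Leung--Xie \cite{jlx}), together with the fact from \cite[Section 6.2]{kuz} that $(\cS_i,\cS_i^\vee)$ are HPD pairs. Those references are designed precisely to handle the situation you correctly identify as the ``main obstacle'': $X=\cS_1\cap\cS_2$ is \emph{not} a linear section of either $\cS_i$, so ordinary HPD does not apply directly, and the point of \cite{kp,jlx} is to prove that when you intersect two varieties each of which has an HPD partner, the intersection of the partners has (under suitable hypotheses) an equivalent derived category.

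The part of your sketch that goes astray is the sentence ``observing via Lemma \ref{tangency} that $Y=\cS_1^\vee\cap\cS_2^\vee$ is exactly the dual linear section.'' This is not correct: $Y$ is not a linear section of $\cS_1^\vee$ in any sense that plain HPD recognizes, for the same reason that $X$ is not a linear section of $\cS_1$. Lemma \ref{tangency} describes tangency of hyperplanes, not a linear subspace whose orthogonal cuts out $X$. Your alternative suggestions (Veronese re-embedding, product $\cS\times\cS$, graph construction) are gestures toward the categorical-join construction, but none of them becomes a proof without the actual machinery of \cite{kp} or \cite{jlx}; the passage from ``$\cS\times\cS$ in $\PP(\Delta\oplus\Delta)$'' to an HPD statement is exactly what those papers supply and is not short. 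The numerical balance you describe (rectangular Lefschetz decomposition of length eight, so both residual categories vanish) is indeed the reason the equivalence is full rather than merely a common semiorthogonal piece, but it only becomes relevant once the join/Pl\"ucker framework is in place. In short: right idea, but the gap you flag is real, and the way to close it is to cite \cite{kp} or \cite{jlx} rather than to improvise an ad hoc reduction to linear HPD.
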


From the point of view of mirror symmetry, $X$ and $Y$ being D-equivalent should have the same mirror: they form an instance of a double mirror. 

\begin{proof} This is a direct application of the results of \cite{kp}, or of 
the Main Theorem in \cite{jlx}. As we already mentioned, the
fact that $(\cS_1, \cS_1^\vee)$ and $(\cS_2,\cS_2^\vee)$ are pairs of homologically projectively dual varieties was established in \cite[Section 6.2]{kuz}.  
\hfill $\Box$
\end{proof}
 
\paragraph{Remark.} In fact the results of \cite{jlx, kp} imply the stronger 
statement that $X$ and $Y$ are derived equivalent as soon as they have dimension five, even
if they are singular. Since the smoothness of a variety can be detected at the level of 
its derived category, this provides another proof of Proposition 4.1. 

\medskip

Applying Proposition 2.1 of \cite{or}, we deduce (recall from Proposition \ref{cy} that $H^5(X, \ZZ)$ and $H^5(Y, \ZZ)$ are torsion free):

\begin{corollary}\label{hodgeq}
The polarized Hodge structures on $H^5(X, \ZZ)$ and $H^5(Y, \ZZ)$ are equivalent. 
\end{corollary}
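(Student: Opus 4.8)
The plan is to deduce the statement about polarized Hodge structures directly from the derived equivalence established in Proposition \ref{dereq}, together with the torsion-freeness of $H^5$ recorded in Proposition \ref{cy}, by invoking the general principle of \cite[Proposition 2.1]{or}. Concretely, a derived equivalence $D^b(X)\simeq D^b(Y)$ between smooth projective varieties induces, via the action of Fourier--Mukai kernels on Hochschild homology (equivalently, on $\bigoplus_p H^{p-q}(X,\Omega^p_X)$), an isomorphism of the total cohomology groups compatible with the Hodge (column) grading. For Calabi--Yau varieties of odd dimension $2m+1$ whose Hodge numbers are concentrated so that the only off-diagonal cohomology sits in degree $2m+1$ (which is exactly what Proposition \ref{cy}(1),(2) guarantees here, with $m=2$), this forces the induced map to send $H^5(X)$ isomorphically onto $H^5(Y)$ as rational (indeed, after the torsion-free statement, integral) Hodge structures.

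The steps, in order, would be: first, record that $X$ and $Y$ are simply connected Calabi-Yau fivefolds with $H^p(X,\Omega^p_X)=\CC$ and $H^q(X,\Omega^p_X)=0$ unless $p=q$ or $p+q=5$ (Proposition \ref{cy}), so that the only primitive Hodge-theoretic content beyond the obvious classes lives in $H^5$; second, apply the derived equivalence of Proposition \ref{dereq} and the compatibility of Fourier--Mukai transforms with the Hodge decomposition to obtain an isomorphism of $\QQ$-Hodge structures $H^5(X,\QQ)\cong H^5(Y,\QQ)$ (this is precisely the content cited from \cite[Proposition 2.1]{or}); third, upgrade from $\QQ$ to $\ZZ$ using the torsion-freeness of $H^5(X,\ZZ)$ and $H^5(Y,\ZZ)$ from Proposition \ref{cy}(3), so that an isomorphism on the underlying lattices can be matched with the Hodge filtration; and fourth, observe that the polarization is preserved because the derived equivalence respects the pairing (up to sign and up to the diagonal classes, which are matched separately), so the polarized Hodge structures are equivalent.

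The proof is essentially an invocation: the real work was done in establishing Proposition \ref{dereq} (via homological projective duality and \cite{kp,jlx}) and Proposition \ref{cy} (the vanishing and torsion-freeness of cohomology). What remains to check is purely formal, namely that the hypotheses of \cite[Proposition 2.1]{or} are met — that $X$ and $Y$ are derived-equivalent Calabi--Yau varieties of odd dimension with the relevant cohomological concentration and torsion-freeness. The only point requiring a word of care is the passage from an abstract isomorphism of Hodge structures to a \emph{polarized} one; here one uses that the pairing on $H^5$ is the cup-product pairing, which is an invariant of the variety up to the action of the derived equivalence on the full cohomology ring, and that on the one-dimensional diagonal pieces the equivalence acts by (signed) isometries. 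I would therefore expect no genuine obstacle, and would present this corollary in a single short paragraph citing \cite[Proposition 2.1]{or}, exactly as the excerpt already does.
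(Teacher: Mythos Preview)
Your proposal is correct and follows exactly the paper's approach: the corollary is deduced from the derived equivalence of Proposition~\ref{dereq} and the torsion-freeness in Proposition~\ref{cy} by a direct application of \cite[Proposition~2.1]{or}. Your additional unpacking of how Fourier--Mukai kernels act on cohomology and why the Hodge concentration forces the isomorphism to land in degree~$5$ is helpful context, but the paper itself simply cites the reference without elaboration.
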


\subsection{Non birationality} 

Now we sketch a proof of the following result, according to the ideas of \cite[Proof of Lemma 4.7]{or}. 

\begin{proposition}\label{nbireq}
Generically, the mirror double spinors  $X$ and $Y$ are not birationally equivalent.
\end{proposition}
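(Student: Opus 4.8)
The plan is to follow the strategy of Ottem--Rennemo closely, exploiting the fact that $X$ and $Y$ carry (up to a power of $\cO(1)$) canonically defined rank-five bundles, namely the restrictions of the tautological bundles on the two pairs of translated spinor varieties. Write $U^{(1)}_X, U^{(2)}_X$ for the restrictions to $X$ of the tautological bundles on $\cS_1, \cS_2$, and similarly $U^{(1)}_Y, U^{(2)}_Y$ on $Y$. By the uniqueness result (the last Proposition of Section 3), the pair $\{\cS_1,\cS_2\}$ is intrinsic to $X$, hence so is the unordered pair of bundles $\{U^{(1)}_X, U^{(2)}_X\}$ up to isomorphism; the same holds on $Y$. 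The first step is therefore to record precisely which discrete invariants of these bundles --- their Chern classes in $H^*(X,\ZZ)$, and in particular the degrees of $c_i$ against the generator of $H^{2i}$ coming from $\cO_X(1)$ --- are forced by the construction, using the Koszul/Eagon--Northcott resolution (\ref{resolutionX}) and the known Chow ring of $\cS$.

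Next I would argue by contradiction: suppose $\phi : X \dashrightarrow Y$ is birational. Since $\Pic(X)=\ZZ\cO_X(1)$ and $\Pic(Y)=\ZZ\cO_Y(1)$ (Proposition \ref{cy}) and both have trivial canonical bundle, $\phi$ is an isomorphism in codimension one, so it induces an isomorphism of Chow groups (or at least of the relevant cohomology lattices) matching $\cO_X(1)$ with $\cO_Y(1)$ and matching the Hodge structures. The point is to transport the two tautological bundles across $\phi$. Here the key input is that a birational map between Calabi-Yau varieties which is an isomorphism in codimension one pulls back reflexive sheaves to reflexive sheaves and preserves spaces of global sections of the relevant twists; using the vanishing/identification statements from Step 3 of the uniqueness proof (e.g. $H^0(X,U_X^\vee)=V_{10}=H^0(Y,U_Y^\vee)$, $H^0(X,\cO_X(1))=\Delta^\vee$), one shows $\phi$ must carry the reconstruction data of $X$ to that of $Y$. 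But the reconstruction recovers the embedding into $\PP\Delta$ on the $X$-side and into $\PP\Delta^\vee$ on the $Y$-side; concretely, $\phi$ would then identify $\cS_1\subset\PP\Delta$ with one of $\cS_1^\vee,\cS_2^\vee\subset\PP\Delta^\vee$ as polarized varieties --- in particular it would realize a projective isomorphism $\PP\Delta\simeq\PP\Delta^\vee$ carrying $\cS$ to $\cS^\vee$ and one translated copy to another. For generic $g$ this is impossible: the two configurations $(\cS_1,\cS_2)$ in $\PP\Delta$ and $(\cS_1^\vee,\cS_2^\vee)$ in $\PP\Delta^\vee$ are not projectively equivalent, since $\PGL(\Delta)$-equivalence of ordered (even unordered) pairs of translates of $\cS$ is detected by a modulus (the double coset in $H\backslash G/H$), and projective duality does not preserve this modulus generically. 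A cleaner version: an isomorphism $X\simeq Y$ respecting $\cO(1)$ and the reconstructed bundles would yield an element of $\PGL(\Delta)$ taking $\{\cS_1,\cS_2\}$ to the dual configuration, contradicting a dimension count on moduli.

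Concretely, the steps in order: (1) identify the intrinsic data $(\cO_X(1), \{U^{(i)}_X\})$ on $X$ and its Chern numbers, using (\ref{resolutionX}) and Lemma \ref{van}; (2) given a hypothetical birational $\phi:X\dashrightarrow Y$, show it is small (an isomorphism in codimension one) since $K_X=K_Y=0$ and the Picard groups are cyclic; (3) show $\phi$ induces isomorphisms on $H^0$ of all the twists appearing in the uniqueness proof, hence carries the reconstruction input on $X$ to that on $Y$; (4) conclude that $\phi$ upgrades to a projective isomorphism $\PP\Delta\simeq\PP\Delta^\vee$ matching the two translated-spinor configurations; (5) derive a contradiction for generic $g$ by a moduli/dimension argument, namely that the generic pair $(\cS_1,\cS_2)$ and its dual pair lie in distinct orbits of $\PGL$.

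The main obstacle is step (2)--(3): one must rule out that $\phi$ contracts a divisor or fails to preserve the relevant cohomology of bundles, i.e.\ one really needs that $\phi$ is an isomorphism in codimension one and that reflexive-sheaf pushforward behaves well. For this I would invoke that on a Calabi-Yau variety with $\Pic=\ZZ$ any birational self-correspondence to another such variety is crepant hence small (no exceptional divisor can appear, as it would have to be a multiple of the ample generator), and then that a small birational map induces isomorphisms on all $H^0$ of reflexive sheaves and on $H^i(\Omega^p)$ --- which is exactly where Proposition \ref{cy} and Corollary \ref{hodgeq} do their work. Once past this, step (5) is a soft dimension count: $\dim(G/H\times G/H) - \dim G$ computes the number of moduli of double spinor varieties, and the locus of those admitting a projective self-duality of the required kind is of strictly smaller dimension, so the generic $X$ is not birational to its mirror $Y$.
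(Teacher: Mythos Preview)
Your reduction in steps (2)--(4) is essentially the paper's: a birational map between Calabi-Yau varieties with cyclic Picard group is small, matches the ample generators, hence induces a projective equivalence $\PP\Delta\simeq\PP\Delta^\vee$; uniqueness of the pair of translates then forces this equivalence to send $\{\cS_1,\cS_2\}$ to $\{\cS_1^\vee,\cS_2^\vee\}$. The paper does this more directly than you do --- it never needs to transport the tautological bundles or rerun the reconstruction, since matching $H^0(\cO(1))$ already gives the projective equivalence, and then uniqueness applies immediately --- but your version is not wrong, only heavier than necessary.

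The gap is in step (5). Writing $\cS_2=g\cS_1$ and fixing $u_0$ with $u_0(\cS_1)=\cS_1^\vee$, the existence of such a projective equivalence amounts to $u_0g^tu_0^{-1}\in Hg^{-1}H$ or $u_0g^tu_0^{-1}\in HgH$, i.e.\ that $g$ is fixed (in $H\backslash G/H$) by one of two involutions. Your ``soft dimension count'' asserts that the fixed locus of these involutions is a proper subvariety, but that is precisely the statement to be proved: nothing prevents either involution from being the identity on the double-coset space, and in that case the fixed locus would be everything. A dimension comparison between $G/H\times G/H$ and $G$ cannot detect this; both the configuration and its dual vary over the same moduli space, and you must show the duality map on that space is nontrivial.

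The paper supplies the missing content by treating the two cases separately. For $u_0g^tu_0^{-1}\in Hg^{-1}H$ it constructs an explicit $H\times H$-invariant function $F(g)=\langle Q^\vee, gQ\rangle$ (built from the invariant quadric $Q\in S^2\Delta^\vee\otimes S^2\Delta^\vee$ coming from the equations of $\cS$) and checks on a maximal torus that $F(u_0g^tu_0^{-1})\ne F(g^{-1})$. For $u_0g^tu_0^{-1}\in HgH$ it argues abstractly: if this held for all $g$, then every $S_\lambda\Delta$ would have at most a one-dimensional space of $H$-invariants, so $\CC[G/H]$ would be multiplicity free and $H$ would be spherical in $G=\SL(\Delta)$; but $\dim H=45$ is far too small for $H$ to have an open orbit in the full flag variety of $\Delta$, a contradiction. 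These two arguments are where the actual work lies, and your proposal does not supply anything in their place.
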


\begin{proof} 
By a standard argument, it is enough to prove that $X$ and $Y$ in $\PP(\Delta)$ are not projectively equivalent. Indeed, suppose $X$ and $Y$ are birational. Since they are Calabi-Yau, $X$ and $Y$ are
minimal models, and this implies that the birational equivalence must be an isomorphism in 
codimension two. Since their Picard groups are both cyclic, the birational equivalence identifies 
their (very ample) generators, and induces an isomorphism between their spaces of sections, 
yielding a projective equivalence as claimed. 

So suppose that $X$ and $Y$ are projectively equivalent. Since they are both contained in a unique pair
of translates $\cS_1$ and $\cS_2$  of the spinor variety, there would exist a projective isomorphism
$u:\PP(\Delta)\simeq \PP(\Delta^\vee)$ such that either $u(\cS_1)=\cS_1^\vee$ and $u(\cS_2)=\cS_2^\vee$, or $u(\cS_1)=\cS_2^\vee$ and $u(\cS_2)=\cS_1^\vee$. 

Let us fix once and for all a linear isomorphism $u_0:\PP(\Delta)\simeq \PP(\Delta^\vee)$ such that $u_0(\cS_1)=\cS_1^\vee$.
There is a linear automorphism $g$ of $\PP(\Delta)$ such that $\cS_1=g(\cS_2)$. 
It is easy to check that the existence of $u$ is equivalent to the existence of $v, w$ in $\Aut(\cS_1)$ such that either 
$$u_0g^tu_0^{-1}=vg^{-1}w \qquad {\mathrm or}\qquad u_0g^tu_0^{-1}=vgw.$$
We follow the approach of \cite{or} to prove that for a general $g$, such elements of $H=\Aut(\cS_1)$ do not exist.

\medskip\noindent {\bf First case}. In order to exclude the possibility that $u_0g^tu_0^{-1}=vg^{-1}w$, one might 
exhibit an $H\times H$-invariant function on $G=\PGL(\Delta)$ such that $F(g^{-1})\ne F(u_0g^tu_0^{-1})$. To do this, recall that
the quadratic equations of the spinor variety $\cS\subset\PP(\Delta)$ are parametrized by $V_{10}\simeq V_{10}^\vee\subset S^2\Delta^\vee$. 
The invariant quadratic form $q\in S^2V^\vee$ is thus mapped to an invariant element $Q\in S^2\Delta^\vee\otimes S^2\Delta^\vee$. 
(In fact this element belongs to the kernel of the product map to $S^4\Delta^\vee$, since the latter contains no invariant.)
Dually there is an invariant element $Q^\vee\in S^2\Delta\otimes S^2\Delta$, and the function we use is $F(g)=\langle Q^\vee, gQ\rangle $. (This is actually a function on $\SL(\Delta)$, but a suitable
power will descend to $G=\PGL(\Delta)$.)  Indeed, restricted to a maximal torus of $\SL(\Delta)$, 
$F(u_0g^tu_0^{-1})$ is a polynomial function of degree four, and it cannot coincide with $F(g^{-1})$ which is a polynomial of degree four in the inverses of the variables -- even modulo the condition that the 
product of the sixteen variables is one. 

\medskip\noindent {\bf Second case}. As observed in \cite[Proof of Lemma 4.7]{or}, it suffices to show that there exists some partition 
$\lambda$ such that the space of $H$-invariants in $S_\lambda\Delta$ is at least two-dimensional ($S_\lambda$ denotes the
Schur functor associated to the partition $\lambda$). We provide an 
abstract argument for that. Suppose the contrary. Let 
$G=\SL(\Delta)$. By the Peter-Weyl theorem, the multiplicity of $S_\lambda\Delta$ inside $\CC[G/H]$ 
is the dimension of its subspace of $H$-invariants. If this dimension is always smaller or equal to one, 
$\CC[G/H]$ is multiplicity free, which means that $H$ is a {\it spherical subgroup} of $G$. Then by \cite[Theorem 1]{vk}, 
$H$ has an open orbit in the complete flag variety $Fl(\Delta)$. But the dimension of $H$ is just too small for that 
to be true, and we get a contradiction.
\hfill $\Box$ 
\end{proof}

\subsection{L-equivalence}

Recall that ${\mathbf L}$ denotes the class of the affine line in the Grothendieck ring of complex varieties. 

\begin{proposition}\label{relgro}
The double spinor varieties $X$ and $Y$ are such that
$$([X]-[Y]){\mathbf L}^7=0$$
in the Grothendieck ring of varieties. 
\end{proposition}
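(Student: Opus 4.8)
The plan is to follow the strategy used for the double Grassmannians, which in turn rests on a classical projective-geometry construction: the key point is that although $X=\cS_1\cap\cS_2$ and $Y=\cS_1^\vee\cap\cS_2^\vee$ are not birational, they become birational after a suitable projective blow-up, and this birationality can be organized into a ``roof'' of projective bundle incidences that splits off powers of $\mathbf L$ in the Grothendieck ring. Concretely, I would introduce the incidence variety
$$Z=\{(x,y)\in\PP\Delta\times\PP\Delta^\vee \;:\; x\in\cS_1\cap\cS_2,\ y\in H_x^{(1)}\cap H_x^{(2)}\},$$
where $H_x^{(i)}$ is the hyperplane in $\PP\Delta^\vee$ dual to the tangent hyperplane condition at $x$ for $\cS_i$; by Lemma \ref{tangency} and Proposition \ref{normal}, the fiber of the first projection $Z\to X$ over a general $x$ is a linear space (an intersection of two projective subspaces cut out inside $\PP\Delta^\vee$ by the conormal directions), of the expected dimension, and symmetrically the second projection $Z\to Y$ has linear fibers. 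The first task is to compute these fibers precisely: using $N_{\cS/\PP\Delta}=U(2)$ of rank $5$, the tangency locus of $\cS_i$ at a point contributes a $\PP^{?}$, and intersecting the two contributions inside $\PP^{15}$ should give fibers that are generically $\PP^{7}$'s (this matches the exponent $7$ in the statement), with a jump locus of smaller dimension.

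The second step is to use the scissor relations in the Grothendieck ring. Over the open locus $X^\circ\subset X$ where the first projection is a Zariski-locally-trivial $\PP^7$-bundle, one gets $[p^{-1}(X^\circ)]=[X^\circ]\,[\PP^7]$, and likewise over $Y^\circ$ one gets $[p_\vee^{-1}(Y^\circ)]=[Y^\circ]\,[\PP^7]$; the difference $[X]-[Y]$ times $[\PP^7]=1+\mathbf L+\cdots+\mathbf L^7$ is then expressed in terms of the ``bad'' loci where the projections degenerate. One then has to analyze these bad loci: stratify $X$ by the dimension of the fiber of $p$, and show that on each stratum the fiber is again a projective space (of larger dimension), so that all the correction terms are themselves products of the form $[\text{locus}]\cdot[\PP^m]$. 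Tracking these carefully, and using $[\PP^m](\mathbf L-1)=\mathbf L^{m+1}-1$ together with the identity $[\PP^7]\equiv$ unit $\times$ (a divisor of $\mathbf L^{8}-1$ type combinatorics), one reduces $([X]-[Y])\mathbf L^7$ to a sum of terms each visibly killed by the remaining powers of $\mathbf L$ — in fact the cleanest route is to show directly that $Z$ is, up to pieces supported on loci of dimension $\le$ something, a $\PP^7$-bundle over both $X$ and $Y$, hence $([X]-[Y])(1+\mathbf L+\cdots+\mathbf L^7)$ lies in the ideal generated by $\mathbf L$ in such a way that multiplying once more (or rather, absorbing the lower terms) yields $([X]-[Y])\mathbf L^7=0$. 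I would model the bookkeeping on \cite[Section 5]{or} and the analogous computation for double Grassmannians, where exactly this pattern appears with the exponent dictated by the fiber dimension.

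The main obstacle, and the place where genuine work is needed, is the analysis of the degeneracy loci of the two projections $Z\to X$ and $Z\to Y$ — i.e. understanding precisely where and how the fiber dimension jumps, and checking that over every stratum the fibers remain projective spaces (so that no ``non-split'' contribution survives in the Grothendieck ring). This requires a careful study, via Lemma \ref{tangency} and the orbit structure of $\Spin_{10}$ on $\cS\times\cS^\vee$, of the locus of points $x\in X$ where the two conormal spaces (inside $\PP\Delta^\vee$) fail to be in general position, together with a transversality/dimension count showing that for general $\cS_1,\cS_2$ these loci have the expected (small) codimension. Once this geometry is pinned down, the Grothendieck-ring manipulation is purely formal: repeated use of $[\PP^k]=1+\mathbf L+\cdots+\mathbf L^k$ and the cut-and-paste relation, exactly as in \cite{or} and \cite{ks}, gives $([X]-[Y])\mathbf L^7=0$.
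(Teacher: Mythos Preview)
Your proposal has a genuine gap at two places: the construction of the incidence $Z$, and the final algebraic step.

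First, the definition of $Z$ does not pin down a well-defined variety with the fibers you claim. If $H_x^{(i)}$ is meant to be the set of hyperplanes tangent to $\cS_i$ at $x$, that is the projectivized conormal space $\PP(N^*_{\cS_i,x})\cong\PP^4$ inside $\PP\Delta^\vee$; for a transverse intersection $X=\cS_1\cap\cS_2$ these two $\PP^4$'s sit in complementary position inside the $\PP^9$ of hyperplanes through $x$ tangent to $X$, so $H_x^{(1)}\cap H_x^{(2)}$ is generically \emph{empty}, not a $\PP^7$. No evident variant (span, union, etc.) produces a $\PP^7$ either. So there is no roof $Z\to X$, $Z\to Y$ with linear fibers of dimension $7$ coming out of this picture.

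Second, even granting a roof with $\PP^7$-fibers over both $X$ and $Y$, the conclusion $([X]-[Y])(1+\mathbf L+\cdots+\mathbf L^7)=0$ does \emph{not} imply $([X]-[Y])\mathbf L^7=0$. In the Grothendieck ring there is no known relation allowing one to trade annihilation by $[\PP^7]$ for annihilation by $\mathbf L^7$; multiplying by $\mathbf L-1$ only gives $([X]-[Y])\mathbf L^8=[X]-[Y]$, which is useless. The exponent $7$ in the statement is not the dimension of a projective fiber but comes from an entirely different source.

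The paper's argument (following \cite{bcp}) avoids both problems by taking the incidence over the \emph{full} spinor varieties rather than over $X$ and $Y$: one sets
$$Q=\{(x,y)\in\cS_1\times\cS_2^\vee : x\in H_y\}.$$
Each fiber of $p_2:Q\to\cS_2^\vee$ is a hyperplane section of $\cS_1$, isomorphic to $\cHS_{sing}$ exactly over $Y=\cS_1^\vee\cap\cS_2^\vee$ and to $\cHS_{reg}$ elsewhere; symmetrically for $p_1$. Piecewise triviality of these two fibrations (which needs the two-orbit result of Igusa over arbitrary extension fields, not just over $\CC$) then gives
$$[Q]=[Y][\cHS_{sing}]+([\cS_2^\vee]-[Y])[\cHS_{reg}]=[X][\cHS_{sing}]+([\cS_1]-[X])[\cHS_{reg}],$$
whence $([X]-[Y])\big([\cHS_{sing}]-[\cHS_{reg}]\big)=0$. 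The factor $\mathbf L^7$ arises because, by the explicit cell decompositions of Propositions~\ref{hssing} and~\ref{hsreg}, one has $[\cHS_{sing}]-[\cHS_{reg}]=\mathbf L^7$ on the nose. This is where the $7$ really comes from, and it is the ingredient your outline is missing.
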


Note that when $X$ and $Y$ are not birational, $[X]-[Y]\ne 0$ in the Grothendieck ring (see \cite[Proposition 2.2]{ks}). 

\begin{proof} The proof is the same as for Theorem 1.6 in \cite{bcp}. 
We consider the incidence correspondence 
$$\xymatrix{
 & Q  \ar@{->}[rd]^{p_2}   \ar@{->}[ld]_{p_1} \\ \cS_1 & & \cS_2^\vee
 }$$
where $Q$ is the variety of pairs $x\in \cS_1$, $y\in \cS_2^\vee$ such that $x$ belongs to the 
 hyperplane $H_y$. The fiber of $p_2$ over $y$ is $\cS_1\cap H_y$; it is singular if and only if 
 $y$ also belongs to $\cS_1^\vee$, hence to $Y$. In this case the fiber is isomorphic
 to $\cHS_{sing}$, otherwise it is isomorphic to $\cHS_{reg}$. This yields two fibrations with constant 
 fibers, which may not be Zariski locally trivial but must be {\it piecewise trivial}, like in  
 \cite[Lemme 3.3]{martin}. Indeed, by \cite[Theorem 4.2.3]{sebag}, this follows from the already
 mentioned result of Igusa that over any field (not of characteristic two) over which the 
 spinor group splits, in particular over any field containing $\CC$, there are only two orbits 
 of non zero spinors \cite[Proposition 2]{igusa}.
 
 We deduce that in the  Grothendieck ring of varieties, 
 $$ [Q]  =  [Y][\cHS_{sing}]+[\cS_2^\vee-Y][\cHS_{reg}].$$
The same analysis for the other projection yields the symmetric relation
$$ [Q]  =  [X][\cHS_{sing}]+[\cS_1-X][\cHS_{reg}].$$
Taking the difference (recall that $\cS_1$ and $\cS_2^\vee$ are isomorphic varieties), we get
$$0=([X]-[Y])([\cHS_{sing}]-[\cHS_{reg}]).$$
But $\cHS_{sing}$ and $\cHS_{reg}$ both have cell decompositions (Propositions \ref{hssing} and \ref{hsreg}), 
with the same numbers of
cells except that $\cHS_{reg}$ has one less in dimension seven. Hence $[\cHS_{sing}]-[\cHS_{reg}]={\mathbf L}^7$.
\hfill $\Box$ 
\end{proof}

\bibliographymark{References}

\end{document}